\def\sqr#1#2{{\vbox{\hrule height.#2pt
     \hbox{\vrule width.#2pt height#1pt \kern#1pt
           \vrule width.#2pt}
     \hrule height.#2pt}}}
\def\square{\sqr74}
\def\qed{{\unskip\nobreak\hfil\penalty50\hskip1em
             \hbox{}\nobreak\hfil\square \parfillskip=0pt
             \finalhyphendemerits=0 \par\goodbreak \vskip8mm}}
\newenvironment{proof}{\medskip\par\noindent{\bf Proof\/}.\quad}{\qquad
\raisebox{-0.5mm}{\rule{1.5mm}{1mm}}\vspace{6pt}}
\newcommand{\D}{\mathrm{~d}}
\newtheorem{Thm}{Theorem}[section]
\newtheorem{Lem}{Lemma}[section]
\newtheorem{Prop}{Proposition}[section]
\newtheorem{Rem}{Remark}
\numberwithin{equation}{section}
\title{{ Multiple standing waves of Helmholtz equation with mixed dispersion concentrating in the high frequency limit}\thanks{The third author was supported by NSFC of China (Grant No. 12261107), and the fourth author was supported by CNPq/Brazil (Grant No. 304627/2023-2).
}}
\author{Shaoxiong Chen$^{1,2}$,~Fei Yuan$^1$,~Fukun Zhao$^{1,2}$\thanks{Corresponding authors.
		E-mail addresses: fukunzhao@163.com (F. Zhao).}~and~Jiazheng Zhou$^3$\\
	{\small 1. Department of Mathematics, Yunnan Normal University, Kunming 650500, PR China}\\
	{\small 2. Yunnan Research Center Of Modern Analysis And Partial Differential Equations,}\\ {\small Kunming 650500, PR China}\\
	{\small 3. Departamento de Matemática, Universidade de Brasília, Brasília, DF 70910-900, Brasil}}
\date{}
\begin{document}
\maketitle
\begin{abstract}
In this paper, we study the nonlinear Helmholtz equation with mixed dispersion
\begin{equation*}
	\Delta^2 u-\beta k^2\, \Delta u+\alpha k^4 u=W(x)\, |u|^{p-2}u~\text{in}~\mathbb{R}^N,
\end{equation*}
where the weight function $W(x)$ is continuous, nonnegative, and satisfies
\[
\limsup_{|x|\to\infty} W(x) \;<\; \sup_{x\in\mathbb{R}^N} W(x).
\]
Within each of the following parameter ranges,
\begin{center}
	(a) $\alpha<0$, $\beta\in\mathbb{R}$; \qquad (b) $\alpha>0$, $\beta<-2\sqrt{\alpha}$; \qquad  (c) $\alpha=0$, $\beta<0$,
\end{center}
After a suitable rescaling, we obtain the existence of dual ground state solutions, which concentrate along the global maximizers of $W$ as $k\to\infty$. In addition, we establish the existence of multiple solutions associated with the set of global maximum points of $W$, and we further characterize the precise concentration behavior of these solutions.

\end{abstract}

{\small \noindent{\bf  Keywords.}~Helmholtz equation; Mixed dispersion;  Concentration; Dual variational method; Dual ground state solution.\\
\noindent{\bf  MSC.} 35J20; 35J05}

\section{Introduction}
This paper is concerned with the existence, multiplicity and concentration behavior of solutions to the following nonlinear Helmholtz equation with mixed dispersion
\begin{equation}\label{1.1}
	\Delta^2 u - \beta k^2\, \Delta u + \alpha k^4 u
	= W(x)\, |u|^{p-2}u
	\quad \text{in } \mathbb{R}^N.
\end{equation}
where $\alpha,\beta \in \mathbb{R}$ and $p > 2$, the term containing $\Delta^2$ describes the higher order dispersion. 
When $k=1$ and $W(x)\equiv 1$, observe that taking the rescaling $w(x)=u(\beta^{\frac{1}{2}}x)$, \eqref{1.1} is equivalent to 
\begin{equation}\label{1.2}
	\gamma \Delta^2 w - \Delta w + \alpha  w =  |w|^{p-2} w
	\quad \text{in } \mathbb{R}^N, 
\end{equation}
where $\gamma=\frac{1}{\beta^2}$. In \cite{2000-shagalov&karpman-PD}, Karpman and Shagalov
 investigated the stabilizing role of higher-order dispersive effects using a comparatively simple model described by the equation
\begin{equation*}
  i\partial_{t}\Psi-\gamma \Delta^2\Psi +\Delta\Psi+|\Psi|^{2p}\Psi=0,~\Psi(0,x)=\Psi_0(x),~(t,x)\in\mathbb{R}\times\mathbb{R}^N.
\end{equation*}
The presence of the fourth order dispersion suggests stability when $(p-2)N<8$, whereas instability may occur when $(p-2)N\ge 8$. For analyses of well-posedness and scattering, see Ben-Artzi, Koch, and Saut~\cite{2000-artzi&koch&saut-PDE} as well as \cite{2007-Benoit-DPDE,2009-Pausader-JFA,2009-Pausader-DCDS}. Results on finite-time blow-up can be found in Boulenger and Lenzmann \cite{2017-Boulenger&Lenzmann-ASN} and the references therein. Moreover, \cite{2017-Boulenger&Lenzmann-ASN} also shows how the forth order Schr\"{o}dinger equation can be derived from the nonlinear Helmholtz equation via the so-called paraxial approximation. In addition, one dimensional standing waves for nonlinear Schr\"{o}dinger equations with mixed dispersion are of significant interest in water wave theory~\cite{1995-Buffoni-JDE,1996-Buffoni-NA}.
\par
We should point out that the parameter $\alpha$ appeared in \eqref{1.2} has important influence on the type of equation. When $\alpha=k^2\geq 0$ and $\gamma=0$, \eqref{1.2} is in fact a Schr\"{o}dinger equation (the case $\alpha=0$ corresponding to the zero mass case)
\begin{equation*}
  - \Delta w + k^2 w =  W(x) |w|^{p-2} w
	\quad \text{in } \mathbb{R}^N,
\end{equation*}
which is related to the standing wave solutions $v(t, x) = e^{ik^2 t} w(x)$ of the nonlinear time-dependent Schrodinger equation
\begin{equation*}
	i v_t + \Delta v +  W(x) |v|^{p-2} v = 0.
\end{equation*}
It is well known that the above stationary Schr\"{o}dinger equation has variational structure, i.e., its weak solutions corresponding to the critical points of the energy functional
\begin{equation*}
E(u)=\frac{1}{2}\int_{\mathbb{R}^N}(|\nabla u|^2+k^2u^2)dx-\frac{1}{p+1}\int_{\mathbb{R}^N}W(x)|u|^{p+1} \D x,~u\in H^1(\mathbb{R}^N).
\end{equation*}
\par
However, when $\gamma = 0$ and $\alpha = -k^2<0$, \eqref{1.2} reduces to the Helmholtz equation
\begin{equation}\label{1.3}
	-\Delta u - k^2 u =  W(x) |u|^{p-2} u~\text{in}~\mathbb{R}^N, 
\end{equation}
which can be viewed as derived from the standing-wave ansatz
\[
\psi(t, x) := e^{i\sqrt{k^2}t} u(x)
\]
for the nonlinear wave equation
\[
\partial_{tt}\psi - \Delta\psi =  W(x) |\psi|^{2} \psi,~(t,x)\in \mathbb{R}\times\in \mathbb{R}^N.
\]
Therefore, it is natural to attempt to find weak solutions of \eqref{1.3} by means of variational methods.
However, by the classical results of Rellich~\cite{1943-Rellich-JDMV} and Kato~\cite{1959-Kato-CPAM}, it is known that solutions of the Helmholtz equation \eqref{1.3} decay at most like
\[
u(x) = O\!\left(|x|^{-\frac{N-1}{2}}\right)
~\text{as}~|x| \to \infty.
\]
Consequently, for solutions of~\eqref{1.3} one can only expect
\[
u \in L^{p}(\mathbb{R}^{N})
~\text{and}~u \in W^{2,p}_{\mathrm{loc}}(\mathbb{R}^{N})
~\text{for}~p > \frac{2N}{N-1}.
\]
Therefore, one can not seek for solutions of \eqref{1.3} in $H^1(\mathbb{R}^N)$. Moreover, the spectrum $\sigma(-\Delta-k^2)=[-k^2,+\infty)$ and hence $0$ becomes an interior point of essential spectrum. These features bring challenges in constructing a variational framework.
\par
To overcome these difficulties, Ev\'equoz and Weth developed two kinds of useful methods. The one is reduce the equation \eqref{1.3} to a bounded domain $\Omega\subset\mathbb{R}^N$ by assuming $supp(W)\subset\Omega$, see \cite{2014-Evequoz&Weth-ARMA} for more general nonlinearity $f(x,u)$. The other one is the dual variational method proposed in \cite{2015-Evequoz&Weth-AIM}, which effectively circumvents the lack of a direct variational formulation.
More precisely, let $p^\prime = \frac{p}{p - 1}$ and denote by $\Phi_k$ the outgoing fundamental solution associated with the operator $-\Delta - k^2$.
Define the real part of the Helmholtz resolvent operator by
\begin{equation*}
	\mathbf{R}_k f := \operatorname{Re}\big(\Phi_k * f\big).
\end{equation*}
Then the problem can be equivalently rewritten as an integral equation
\begin{equation*}
	|v|^{p^\prime-2}v =  W^{1/p}\, \mathbf{R}_k\!\big( W^{1/p}v\big),
\end{equation*}
whose solutions correspond exactly to weak solutions of the original equation.
So they only need to seek for critical points of the dual energy functional $J_k : L^{p^\prime}(\mathbb{R}^N) \to \mathbb{R}$ defined by
\begin{equation*}
	J_k(v) = \frac{1}{p^\prime} \int_{\mathbb{R}^N} |v|^{p^\prime}\,\mathrm{d}x
	- \frac{1}{2} \int_{\mathbb{R}^N} \big( W^{1/p} v\big)\,
	\mathbf{R}_k\!\big( W^{1/p} v\big)\, \mathrm{d}x.
\end{equation*}
By means of the Mountain Pass Theorem, they proved that if
\begin{equation*}
	\frac{2(N+1)}{N-1} < p < \frac{2N}{N-2}, \quad (N \ge 3),
\end{equation*}
and $ W$ is positive, bounded, and $\mathbb{Z}^N$-periodic, then there exists a ground state solution
$v \in L^{p^\prime}(\mathbb{R}^N)$.
Consequently, the associated function
\begin{equation*}
	u := \mathbf{R}_k( W^{1/p}v)
\end{equation*}
belongs to $W^{2,q}(\mathbb{R}^N) \cap C^{1,\alpha}(\mathbb{R}^N)$ for any $q \in [p, \infty)$ and $\alpha \in (0, 1)$.
Moreover, under proper assumptions on $W$,
then, as $k \to \infty$, there exist solutions which, after suitable rescaling, tend to concentrate around the global maximizers of $ W$ (see~\cite{2017-Evequoz-AMPA}).
For further developments on Helmholtz-type equations, we refer the reader to~\cite{2021-Chen&Evequoz&Weth-SJAM,2021-Cossetti&Mandel-JFA,2015-Evequoz-ZAMP,
2014-Evequoz&Weth-ARMA,2020-Evequoz&Yecsil-PRSESA,
2022-Griesmaier&Knoller&Mandel-JMAA,2023-Guan&Murugan&Wei-CCM,
2024-liu&qiu&zhao-AML,2019-Mandel-ANS,2017-Mandel&Montefusco&Pellacci-ZAMP,
2021-Mandel&Scheider,2021-Mandel&Scheider&Yecsil-CVPDE}. 
The works \cite{2023-Ding&Wang-JDE,2025-Qiu&Yuan&Zhao-ZAMP} concerned with the Helmholtz systems.
\par
As far as we known, the equation \eqref{1.1} was only studied by Bonheure, Casteras, and Mandel ~\cite{2019-Denis-Jean-Rainer-JLMS}, for the case that $k=1$ and $W(x)\in L^\infty(\mathbb{R}^N)$ is $Z^N$-peridoic with $\inf\limits_{\mathbb{R}^N} W>0$.
To establish the existence of solutions, Bonheure, Casteras, and Mandel ~\cite{2019-Denis-Jean-Rainer-JLMS} introduced, in the sense of the limiting absorption principle, the resolvent operator
\begin{equation}\label{1.4}
	\mathscr{R} := (\Delta^2 - \beta\Delta + \alpha)^{-1},
\end{equation}
to overcome the difficulty that the essential spectrum of $L:=\Delta^2-\beta \Delta+\alpha$ contains zero. Their analysis is based on a modification of the dual variational method developed by Evéquoz and Weth \cite{2015-Evequoz&Weth-AIM}. The central idea is to construct a resolvent operator whose mapping properties are comparable in several aspects even better than those available in the second order case.
In particular, since the term $\Delta^2 u$ accounts for higher order dispersion, the associated linear operator displays a significantly more intricate structure compared to the classical second order Helmholtz operator. By deriving precise mapping properties of \eqref{1.4} and establishing refined oscillatory decay estimates for its kernel, they successfully reconstructed a dual variational framework exhibiting the required Mountain Pass geometry. This in turn allows the Mountain Pass Theorem (\cite{1973-Ambrosetti-Rabinowitz-JFA}) to be applied within the modified dual setting, leading to the existence of nontrivial solutions to the fourth-order nonlinear Helmholtz equation. 
\par
However, when $W(x)$ is not $\mathbb{Z}^N$-periodic, the questions of existence, multiplicity and concentration behavior of solutions of \eqref{1.1} as $k\to\infty$ remain open, and are precisely the focus of the present work.
To state our main results, we make the following assumptions.
\begin{itemize}
  \item[$(A_1)$]
  \begin{itemize}
    \item[(1)] $N \ge 2$, $\alpha < 0$ and $\beta\in\mathbb{R}$, or $\alpha > 0$ and $\beta < -2\sqrt{\alpha}$.
    \item[(2)] $p\in (\frac{2(N+1)}{N-1},2^{**})$, where
\begin{equation*}
	2^{**} := \frac{2N}{(N-4)_+} :=
	\begin{cases}
		+\infty, & N \le 4,\\[2pt]
		\dfrac{2N}{N-4}, & N \ge 5.
	\end{cases}
\end{equation*}
  \end{itemize}   
\end{itemize}

\begin{itemize}
  \item[$(A_2)$]
  \begin{itemize}
    \item[(1)] $N \ge 3$ and $\alpha = 0,\ \beta < 0$.
    \item[(2)] $p\in (\frac{2N}{N-2},2^{**})$.
  \end{itemize}
\end{itemize}
  \begin{itemize}
	\item[($W_1$)] $W$ is continuous, bounded, and nonnegative in $\mathbb{R}^N$;
	\item[($W_2$)] $W$ satisfies
	\begin{equation}\label{1.5}
		W_\infty:=\limsup_{|x|\to\infty}W(x)\;<\;W_0:=\sup_{x\in\mathbb{R}^N}W(x).
	\end{equation}
\end{itemize}
\par
The first result concerning with the existence and concentration behavior of the dual ground state solutions.
\begin{Thm}\label{thm1.1}
	Assume $(A_1)$ or $(A_2)$, and $W(x)$ satisfies $(W_1)$ and $(W_2)$. Then
	\begin{itemize}
		\item[(i)] there exists $k_0>0$ such that, for all $k>k_0$, \eqref{1.1} admits a dual ground state solution;
		\item[(ii)] let $k_n\to \infty$ with $k_n>k_0$ for all $n$, and for each $n$ let $u_n$ be a dual ground state solution of
		\begin{equation*}
			\Delta^2 u_n - \beta k_n^2\, \Delta u_n + \alpha k_n^4\, u_n
			= W(x)\, |u_n|^{p-2}u_n
			~\text{in}~\mathbb{R}^N.
		\end{equation*}
		Then there exists a point $x_0\in\mathbb{R}^N$ such that $W(x_0)=W_0$, a ground state solution $u_0$ of the limit problem
		\begin{equation*}
			\Delta^2 u - \beta\, \Delta u + \alpha\, u
			= W_0\, |u|^{p-2}u
			\quad \text{in } \mathbb{R}^N,
		\end{equation*}
		and a sequence $(x_n)$ with $x_n\to x_0$ such that, after translation and rescaling,
		\begin{equation*}
			u_n\!\left(\tfrac{\cdot}{k_n}+x_n\right) \to u_0
			\quad \text{in } L^p(\mathbb{R}^N)\ \text{as } n\to\infty.
		\end{equation*}
	\end{itemize}
\end{Thm}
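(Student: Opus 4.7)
The plan is to adapt the dual variational framework of Bonheure--Casteras--Mandel~\cite{2019-Denis-Jean-Rainer-JLMS} to the non-periodic setting and to exploit $(W_2)$ to recover compactness. First I would rescale by setting $v(x):=k^{-\sigma}u(x/k)$ with $\sigma=4/(p-2)$, turning \eqref{1.1} into
\begin{equation*}
\Delta^2 v - \beta\,\Delta v + \alpha\,v = W_k(x)\,|v|^{p-2}v, \qquad W_k(x):=W(x/k).
\end{equation*}
Under $(A_1)$ or $(A_2)$ the resolvent $\mathscr{R}:=(\Delta^2-\beta\Delta+\alpha)^{-1}$ from \cite{2019-Denis-Jean-Rainer-JLMS} is continuous $L^{p'}\to L^{p}$ with the required oscillatory kernel estimates, so via the change of variable $w:=W_k^{1/p}|v|^{p-2}v$ solutions correspond to critical points of the dual functional
\begin{equation*}
J_k(w)=\frac{1}{p'}\int_{\mathbb{R}^N}|w|^{p'}\,dx-\frac{1}{2}\int_{\mathbb{R}^N} W_k^{1/p}\,w\cdot\mathscr{R}\!\bigl(W_k^{1/p}w\bigr)\,dx
\end{equation*}
on $L^{p'}(\mathbb{R}^N)$.

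\textbf{Existence.} I would verify that $J_k$ enjoys a mountain-pass geometry uniform in $k$ (using $(W_1)$ and the continuity bounds on $\mathscr{R}$), producing a level $c_k$. Denote by $c(W)$ the ground-state level of the autonomous dual functional with constant weight $W$; scaling gives $c(W)=W^{-\mu}c(1)$ for some $\mu>0$, so $(W_2)$ forces $c(W_0)<c(W_\infty)$. Taking as test path a rescaled autonomous ground state centred near a global maximum $\bar x$ of $W$ and using $W_k(x)=W(x/k)\to W_0$ locally near $k\bar x$ yields $\limsup_{k\to\infty}c_k\le c(W_0)<c(W_\infty)$. A splitting lemma for Palais--Smale sequences in $L^{p'}$ (obtained by translating a PS sequence and passing to the weak limit, using translation invariance and weak continuity of the bilinear form $\langle\cdot,\mathscr{R}\cdot\rangle$) decomposes such a sequence as a strong limit plus finitely many nontrivial bumps, each carrying energy at least $c(W_\infty)$; the strict inequality above excludes bumps, yielding strong convergence and a dual ground state $w_k$ for all large $k$.

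\textbf{Concentration.} For (ii), let $w_n$ be dual ground states corresponding to $k_n\to\infty$; their $L^{p'}$ norms are bounded above (from $c_{k_n}\to c(W_0)$ and a Nehari-type identity) and below away from zero. By Lions' vanishing lemma in $L^{p'}$ there exist $y_n\in\mathbb{R}^N$ and $\delta>0$ with $\int_{B_1(y_n)}|w_n|^{p'}\,dx\ge\delta$. Setting $x_n:=y_n/k_n$, the sequence $(x_n)$ must be bounded: if a subsequence satisfies $|x_n|\to\infty$, then along a further subsequence $W(x_n)\to L\le W_\infty<W_0$, so $W_{k_n}(\cdot+y_n)\to L$ locally uniformly, the weak limit of $w_n(\cdot+y_n)$ is a nontrivial critical point of the autonomous dual functional with weight $L$, and hence $\liminf_n J_{k_n}(w_n)\ge c(L)\ge c(W_\infty)>c(W_0)$, contradicting $J_{k_n}(w_n)\to c(W_0)$. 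Passing to a subsequence $x_n\to x_0$, continuity gives $W_{k_n}(\cdot+y_n)\to W(x_0)$ locally uniformly, so the weak limit $w_0$ is a nontrivial critical point of the autonomous functional with weight $W(x_0)$, and the energy comparison forces $W(x_0)=W_0$ and $w_0$ to be a ground state. A Brezis--Lieb argument upgrades weak to strong convergence in $L^{p'}$, and undoing the dual transformation $u_0=\mathscr{R}(W_0^{1/p}w_0)$ gives $u_n(\cdot/k_n+x_n)\to u_0$ in $L^p(\mathbb{R}^N)$.

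\textbf{Main difficulty.} The key obstacle is the splitting/concentration-compactness analysis in the dual $L^{p'}$ setting: because the kernel of $\mathscr{R}$ is only oscillatory-decaying and not integrable, the classical Lions-type arguments do not apply directly and must be replaced by translation invariance combined with the refined $L^{p'}\to L^p$ estimates of \cite{2019-Denis-Jean-Rainer-JLMS}. Establishing the strict energy inequality $c_k<c(W_\infty)$ and using it both to recover compactness of PS sequences and to prevent the concentration points from escaping to infinity is the heart of the argument and is where condition $(W_2)$ plays the decisive role.
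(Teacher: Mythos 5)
Your proposal follows essentially the same route as the paper: the same rescaling $u_\varepsilon(x)=k^{-4/(p-2)}u(x/k)$, the dual functional built from the resolvent of \cite{2019-Denis-Jean-Rainer-JLMS}, the level comparison $c_\varepsilon\to c_0<c_\infty$ driven by $(W_2)$, a Palais--Smale/splitting analysis below $c_\infty$ for existence, and the transfer back via continuity of $\mathbf{R}:L^{p'}\to L^{p}$. The genuine difference is in the concentration step: the paper passes to the auxiliary functions $v_{0,n}=(W_{\varepsilon_n}/W_0)^{1/p}v_n$, shows they are a minimizing sequence for the autonomous functional $J_0$, and then invokes the autonomous profile decomposition (Lemma~\ref{lem:profile}) plus a Fatou argument to rule out escape of the centers, whereas you work directly on the non-autonomous ground states through a vanishing/nonvanishing dichotomy, identification of the translated weak limit, and a Brezis--Lieb upgrade. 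Your variant can be made to work, but two of its ingredients must be replaced by the tools the paper actually uses: (a) there is no ``Lions vanishing lemma in $L^{p'}$'' --- with no gradient control, the nonvanishing of $\int_{B_1(y_n)}|w_n|^{p'}$ and, more importantly, the nontriviality of the translated weak limit have to come from the Ev\'equoz--Weth nonvanishing theorem for the quadratic form $\int v\,\mathbf{R}v$ combined with the local compactness of $K_\varepsilon$ (mere mass on balls does not prevent a zero weak limit in $L^{p'}$); this is precisely what Lemma~\ref{lem:profile} packages; and (b) since $W_\infty$ is only a $\limsup$ and $W$ is not assumed uniformly continuous, you cannot claim $W_{k_n}(\cdot+y_n)\to L$ locally uniformly along escaping centers; the robust substitute is the paper's Fatou estimate using the pointwise bound $\limsup_n W(\varepsilon_n x+\varepsilon_n y_n)\le W_\infty$, which produces the factor $(W_0/W_\infty)^{p'-1}$ and the contradiction with $(W_2)$. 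The paper's detour through $J_0$ buys exactly this: all compactness is delegated to the autonomous profile decomposition, and no convergence of the weight at infinity is ever needed; your more direct scheme is slightly shorter to state but requires these two repairs to be complete.
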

\par
The strategy of the proof comes from \cite{2019-Denis-Jean-Rainer-JLMS} and  \cite{2015-Evequoz&Weth-AIM}, a key structural observation in the proof of Theorem \ref{thm1.1} is that the operator $L$ can be factorized into the product of two second-order operators which was found in \cite{2019-Denis-Jean-Rainer-JLMS}
\begin{equation}\label{1.6}
	L = \Delta^2 - \beta\Delta + \alpha = (-\Delta - a_1)(-\Delta - a_2),
\end{equation}
where
\begin{equation*}
	a_1 := \frac{-\beta + \sqrt{\beta^2 - 4\alpha}}{2}~\text{and}~
	a_2 := \frac{-\beta - \sqrt{\beta^2 - 4\alpha}}{2}.
\end{equation*}
Moreover, it is readily seen that, if $\alpha<0$, then $a_{1}>0>a_{2}$; while if $\alpha=0$ and $\beta<0$, one has $a_{1}>0=a_{2}$, and in these cases $L$ can be regarded as the composition of a Schrödinger operator with a Helmholtz (or Laplace) operator. In particular, if $\alpha>0$ and $\beta<-2\sqrt{\alpha}$, then $a_{1}>a_{2}>0$, so that $L$ factorizes into the product of two Helmholtz operators, whereas in the borderline case $\beta=-2\sqrt{\alpha}$ a double root occurs and we have $a_{1}=a_{2}=\sqrt{\alpha}$.
Thus, the above factorization shows that at least one factor is always of Helmholtz type, whereas the other factor switches between Schrödinger type and Helmholtz (or Laplace) type, depending on the parameter regime. Despite these structural differences of $L$ in different parameter regions, the construction of its fundamental solution can still be carried out within a unified analytical framework. More precisely, as will be shown in Section~2, one can derive a representation formula for the fundamental solution that is valid for all admissible parameter regimes.
\par
Under condition \eqref{1.5}, the dual energy functional depending on $k$ satisfies the Palais--Smale condition, and the energy level ``at infinity'' is strictly higher than the lowest energy level in a bounded region. Unlike in related problems (see, for instance, \cite{2003-Alves&Soares&Yang-ANS}), here we have no sign information on the nonlocal term $\int_{\mathbb{R}^N} u\mathscr{R}v dx$, since the real part of the Helmholtz resolvent is not positive definite. This difficulty is overcome by a new energy estimate (Lemma~\ref{lem:offdiag}) for the nonlocal interaction between functions with disjoint supports. The $L^p$-concentration in conclusion (ii) follows from energy comparison and a bubble decomposition for Palais--Smale sequences (Lemma~\ref{lem:profile}), inspired by the work of Benci and Cerami~\cite{1987-Benci&Cerami-ARMA}.
\medskip
\par
Next, we will focus on the multiplicity of solutions. Let
\begin{equation*}
	M:=\{x\in\mathbb{R}^N:\ W(x)=W_0\}
\end{equation*}
denote the set of global maxima of $W$. By $(W_1)$ and $(W_2)$, $M\neq \emptyset$. For $\delta>0$, set
\begin{equation*}
	M_\delta:=\{x\in\mathbb{R}^N:\ \operatorname{dist}(x,M)\le\delta\}.
\end{equation*}
If $Y$ is a closed subset of a metric space $X$, we denote by $\operatorname{cat}_X(Y)$ the Lusternik--Schnirelmann category of $Y$ relative to $X$ (see \cite{1988-Szulkin-Ann}).
\par
The second result implies the multiplicity of nontrivial solutions related to the topology of $M_\delta$.
\begin{Thm}\label{thm1.2}
Under the assumptions of Theorem \ref{thm1.1}, for every $\delta>0$,  there exists $k(\delta)>0$ such that, for all $k>k(\delta)$, problem~\eqref{1.1} has at least $m := \operatorname{cat}_{M_\delta}(M)$
pairwise distinct nontrivial solutions
\[
u_k^{(1)},\dots,u_k^{(m)}.
\]
Moreover, these solutions satisfy the following concentration property: for each fixed $j\in\{1,\dots,m\}$, for any sequence $(k_n)_n$ with $k_n>k(\delta)$ and $k_n\to\infty$, there exist $x_0^{(j)}\in M$, a ground state solution $u_0^{(j)}$ of the limit equation, and a sequence of points $(x_n^{(j)})_n\subset\mathbb{R}^N$ such that, up to a subsequence,
\[
x_n^{(j)}\to x_0^{(j)},\qquad
k_n^{-\frac{4}{p-2}}\,
u_{k_n}^{(j)}\!\Big(\tfrac{\cdot}{k_n}+x_n^{(j)}\Big)
\ \to\ u_0^{(j)}
\quad \text{in }L^p(\mathbb{R}^N)\ \text{as }n\to\infty.
\]
\end{Thm}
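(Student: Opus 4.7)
The plan is to adapt the Benci--Cerami topological argument based on Lusternik--Schnirelmann category to the dual variational setting, in the same spirit as the proof of Theorem~\ref{thm1.1}. Let $J_k:L^{p'}(\mathbb{R}^N)\to\mathbb{R}$ denote the dual energy functional associated with the operator $L_k:=\Delta^2-\beta k^2\Delta+\alpha k^4$ and the weight $W$, let $\mathcal{N}_k$ be the corresponding Nehari (dual) manifold and set $c_k:=\inf_{\mathcal{N}_k}J_k$. Write $c_\infty^{\tau}$ for the ground state level of the autonomous limit problem with constant weight $\tau>0$; monotonicity of the Nehari level in $\tau$ together with $(W_2)$ gives $c_\infty^{W_0}<c_\infty^{W_\infty}$. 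From Theorem~\ref{thm1.1} and Lemma~\ref{lem:profile}, $c_k\to c_\infty^{W_0}$ as $k\to\infty$, and $J_k$ satisfies the $(PS)_c$ condition on $\mathcal{N}_k$ for all $c<c_\infty^{W_\infty}$ and all sufficiently large $k$.

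First I would build an embedding $\Phi_k:M\to\mathcal{N}_k$ by transplanting a fixed dual ground state $w_\infty$ of the limit problem. For $y\in M$, set
\[
\Phi_k(y)(x):=t_{k,y}\,\chi\!\left(\tfrac{x-ky}{r_k}\right)w_\infty(x-ky),
\]
where $r_k\to\infty$ with $r_k/k\to 0$, $\chi$ is a smooth radial cutoff, and $t_{k,y}>0$ is the unique Nehari projection scalar. Using the oscillatory and exponential decay estimates on the resolvent kernel of $L_k$ from Section~2, one shows $\lim_{k\to\infty}\sup_{y\in M}\bigl|J_k(\Phi_k(y))-c_\infty^{W_0}\bigr|=0$, so that $\Phi_k(M)$ eventually lies in the strict sublevel $\{J_k<c_\infty^{W_\infty}\}$. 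Next I would introduce the barycenter map
\[
\beta(v):=\frac{\int_{\mathbb{R}^N}x\,|v(x)|^{p'}\,dx}{k\int_{\mathbb{R}^N}|v(x)|^{p'}\,dx}\in\mathbb{R}^N,\qquad v\in\mathcal{N}_k,
\]
where the factor $1/k$ restores the original spatial scale. The core analytic step is to prove that for every $\delta>0$ there exist $\eta_\delta>0$ and $k(\delta)>0$ such that $v\in\mathcal{N}_k$, $k>k(\delta)$ and $J_k(v)\le c_k+\eta_\delta$ imply $\beta(v)\in M_\delta$. The argument is by contradiction: if $v_n\in\mathcal{N}_{k_n}$ with $J_{k_n}(v_n)\to c_\infty^{W_0}$ but $\beta(v_n)\notin M_\delta$, the profile decomposition of Lemma~\ref{lem:profile} extracts translated bubbles of $w_\infty$ centered at points $k_n y_n^{(\ell)}$; the off-diagonal estimate of Lemma~\ref{lem:offdiag} controls the non-sign-definite cross interactions in $\int v_n\mathscr{R}_{k_n}v_n\,dx$, so that the limit energy splits as $\sum_\ell c_\infty^{W(y_\infty^{(\ell)})}$. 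Comparison with $c_\infty^{W_0}$ forces a single surviving bubble whose center $y_\infty$ satisfies $W(y_\infty)=W_0$, i.e.\ $y_\infty\in M$, contradicting $\beta(v_n)\notin M_\delta$.

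With these two maps in hand, the multiplicity follows by a standard category calculation. Choosing $k(\delta)$ even larger if necessary, $\beta\circ\Phi_k$ is homotopic in $M_\delta$ to the inclusion $M\hookrightarrow M_\delta$ via the straight-line homotopy in $\mathbb{R}^N$, which one checks remains in $M_\delta$, hence
\[
\operatorname{cat}\!\bigl(\{v\in\mathcal{N}_k:J_k(v)\le c_k+\eta_\delta\}\bigr)\ \ge\ \operatorname{cat}_{M_\delta}(M)=m.
\]
Since $c_k+\eta_\delta<c_\infty^{W_\infty}$ for $k$ large and the $(PS)$ condition holds below this threshold, the Lusternik--Schnirelmann multiplicity theorem of Szulkin~\cite{1988-Szulkin-Ann} yields $m$ distinct critical points $v_k^{(1)},\dots,v_k^{(m)}$ of $J_k|_{\mathcal{N}_k}$. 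Setting $u_k^{(j)}:=\mathscr{R}_k(W^{1/p}v_k^{(j)})$ produces the claimed solutions of~\eqref{1.1}. The concentration assertion then follows by reapplying the profile decomposition to each sequence $(v_{k_n}^{(j)})$: the barycenters $x_n^{(j)}:=\beta(v_{k_n}^{(j)})$ converge along a subsequence to some $x_0^{(j)}\in M$, and the unique bubble extracted from this sequence, after the rescaling $k_n^{-4/(p-2)}u_{k_n}^{(j)}(\cdot/k_n+x_n^{(j)})$, is precisely the limit $u_0^{(j)}$ appearing in the statement.

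The hard part will be the barycenter step. The non-sign-definiteness of the quadratic form $v\mapsto\int v\,\mathscr{R}_k v\,dx$ means that cross terms between well-separated bubbles are not a priori negligible, so a naive concentration--compactness split does not immediately deliver the strict lower bound $\sum_\ell c_\infty^{W(y_\infty^{(\ell)})}$. This is exactly the point where Lemma~\ref{lem:offdiag} must be used quantitatively, and verifying that its decay rate dominates the oscillation of the resolvent kernel between bubbles separated by distances comparable to $k$ is the key technical difficulty on which the whole multiplicity argument rests.
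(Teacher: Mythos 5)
Your overall architecture coincides with the paper's: a test map from $M$ into the Nehari manifold built from a cut-off, translated dual ground state of the limit functional, a barycenter map, a straight-line homotopy showing the composition is homotopic to the inclusion $M\hookrightarrow M_\delta$, a category lower bound for a low sublevel set, Lusternik--Schnirelmann theory combined with the $(PS)$ condition below the level at infinity (Lemma~\ref{lem:PS-below-cinf}), dual reconstruction of the solutions, and concentration via the compactness result. (The paper works in the rescaled variables $\varepsilon=k^{-1}$, $W_\varepsilon(x)=W(\varepsilon x)$; your formulation at scale $k$ is only cosmetically different.) However, two points in your plan are genuine gaps. First, your barycenter $\beta(v)=\bigl(k\int_{\mathbb{R}^N}|v|^{p'}dx\bigr)^{-1}\int_{\mathbb{R}^N}x\,|v(x)|^{p'}dx$ is not well defined on $L^{p'}(\mathbb{R}^N)$: knowing only $|v|^{p'}\in L^1(\mathbb{R}^N)$ gives no decay, so $x\,|v(x)|^{p'}$ need not be integrable. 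The paper avoids this by composing with the bounded truncation $\Psi(\varepsilon x)$ (taking values in $\overline{B_\rho}$ with $M_\delta\subset B_\rho$), which also makes the map continuous and invariant under positive scaling; you need this, or an equivalent truncation, before the homotopy argument can even start.

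Second, your ``core analytic step'' --- that low-energy elements of the Nehari manifold have barycenter in $M_\delta$ --- is proposed via a multi-bubble splitting of sequences for the \emph{non-constant-weight} functional, with limit energy $\sum_\ell c_\infty^{W(y_\infty^{(\ell)})}$. No such decomposition is available in the paper: Lemma~\ref{lem:profile} is stated only for constant weight, and extending it to variable $W$ with locally frozen coefficients is precisely the heavy concentration--compactness work you yourself flag as the difficulty. The paper sidesteps it: for $v_n\in\mathcal{N}_{\varepsilon_n}$ with $J_{\varepsilon_n}(v_n)\to c_0$ it sets $v_{0,n}=(W_{\varepsilon_n}/W_0)^{1/p}v_n$, uses $W_\varepsilon\le W_0$ to project onto the constant-weight Nehari manifold $\mathcal{N}_0$ with factor $t_{0,n}\le 1$, obtains a minimizing sequence for $J_0$, and then applies Ekeland's principle plus the constant-coefficient Lemma~\ref{lem:profile}; escape to infinity is excluded by Fatou's lemma and $(W_2)$. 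This is exactly Proposition~\ref{prop:34}, already proved for Theorem~\ref{thm1.1}(ii), and it is what the paper invokes both for the analogue of \eqref{4-2} and for the final concentration claim; you should use it rather than re-derive a splitting theorem for variable $W$. Two minor points: the reconstruction formula should use the real part $\mathbf{R}=\operatorname{Re}\mathscr{R}$ rather than $\mathscr{R}$ itself, and the passage from $v_{k_n}^{(j)}(\cdot+y_n^{(j)})\to w_0^{(j)}$ in $L^{p'}$ to the stated $L^p$ convergence of the rescaled $u_{k_n}^{(j)}$ still requires checking that $W_{\varepsilon_n}^{1/p}(\cdot+y_n^{(j)})\,v_{k_n}^{(j)}(\cdot+y_n^{(j)})\to W_0^{1/p}w_0^{(j)}$ in $L^{p'}(\mathbb{R}^N)$ and then applying the $L^{p'}\to L^p$ continuity of $\mathbf{R}$, a step your sketch glosses over.
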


The strategy of the proof of Theorem \ref{thm1.2} is topological in nature and follows the scheme of Cingolani and Lazzo~\cite{1997-Cingolani&Lazzo-TMNA} (see also \cite{2000-Cingolani&Lazzo-JDE}), which in turn builds on ideas of Benci, Cerami, and Passaseo~\cite{1991-Benci&Cerami&Passaseo-NA,1991-Benci&Cerami-ARMA} in bounded domains. The crucial point is to construct two maps whose composition is homotopic to the inclusion $M\hookrightarrow M_\delta$. For related multiplicity results for Schrödinger equations with $V>0$ and small $\varepsilon>0$, we refer to \cite{2015-Cingolani&Jeanjean&Tanaka-CVPDE} and the references therein.

Once the multiplicity has been established, we then turn to the concentration behavior of the corresponding family of solutions. For this purpose, we adapt the method developed by Alves and Figueiredo for quasilinear problems in $\mathbb{R}^N$ with critical exponential growth~\cite{AlvesFigueiredo-crit-exp-RN}.
\par
This paper is organized as follows. In Section~2, for fixed $k$, we set up the dual variational framework, introduce the Nehari manifold, and derive a bubble decomposition for Palais--Smale sequences when $W$ is constant. Based on these tools, we show that the dual functional satisfies the Palais--Smale condition on the Nehari manifold below a suitable energy threshold. A key ingredient is an ``off-diagonal'' decay estimate (Lemma~\ref{lem:offdiag}) for the nonlocal interaction induced by the fourth-order Helmholtz resolvent.
In Section~3, in the limit $\varepsilon=k^{-1}\to 0$, we show that the least energy among critical points of the dual functional is achieved (Proposition~\ref{prop:33}), which yields Theorem~\ref{thm1.1}(i); we then establish the concentration of dual ground states (Proposition~\ref{prop:34}), thus proving Theorem~\ref{thm1.1}(ii).
The proof of Theorem~\ref{thm1.2} is given in Section~4.

\medskip

\textbf{Notation.}
\begin{itemize}
	\item Assume that $p>1$, and let $p'=\dfrac{p}{p-1}$ be the H\"{o}lder conjugate exponent of $p$. In what follows, unless the range of $p$ is specified otherwise, we always assume that $p$ satisfies either $(A_1)$ or $(A_2)$.
	
	\item We denote by $\|\,\cdot\,\|_q$ the norm in $L^q(\mathbb{R}^N)$ ($1\le q\le\infty$).
	\item $\mathcal{S}(\mathbb{R}^N)$ denotes the Schwartz space of rapidly decreasing functions on $\mathbb{R}^N$.
	\item For $r>0$ and $x\in\mathbb{R}^N$, $B_r(x)$ is the open ball centered at $x$ with radius $r$; we write $B_r:=B_r(0)$. $\mathbf{1}_B$ denotes the characteristic function of a set $B$.
	\item  According to \cite{1964bookGelfand}, for $\nu\in\mathbb{R}$ the Hankel function of the first kind is defined by
	\[
	H_\nu^{(1)}(z)=J_\nu(z)+ i\,Y_\nu(z),
	\]
	where $J_\nu$ is the Bessel function of the first kind,
	\[
	J_\nu(z)=\sum_{m=0}^{\infty}\frac{(-1)^m}{m!\,\Gamma(m+\nu+1)}
	\left(\frac{z}{2}\right)^{2m+\nu},
	\]
	and $\Gamma$ is the Gamma function
	\[
	\Gamma(s)=\int_{0}^{\infty} t^{s-1}e^{-t}\,dt,\qquad s>0.
	\]
	Moreover, $Y_\nu$ denotes the Bessel function of the second kind. For $\nu\notin\mathbb{Z}$,
	\[
	Y_\nu(z)=\frac{J_\nu(z)\cos(\pi\nu)-J_{-\nu}(z)}{\sin(\pi\nu)},
	\]
	and for $n\in\mathbb{Z}$ it is defined by continuity as
	\[
	Y_n(z):=\lim_{\nu\to n}Y_\nu(z).
	\]
Meanwhile, one also has
	\[
	\frac{1}{x+i0}:=\lim_{\varepsilon\to0^+}\frac{1}{x+i\varepsilon}
	=\operatorname{p.v.}\frac{1}{x}-i\pi\,\delta(x),
	\]
	where $\delta$ is the Dirac delta distribution and $\operatorname{p.v.}\frac{1}{x}$ denotes the Cauchy principal value.

\end{itemize}

\section{Variational framework}

\subsection{Variational framework}

Let $\varepsilon := k^{-1}$ and define
\[
u_\varepsilon(x) := \varepsilon^{\frac{4}{p-2}}\,u(\varepsilon x),~
W_\varepsilon(x) := W(\varepsilon x),
\]
then \eqref{1.1} can be rewritten as
\begin{equation}\label{2.1}
	\Delta^2 u_\varepsilon - \beta\, \Delta u_\varepsilon + \alpha\, u_\varepsilon
	= W_\varepsilon(x)\, |u_\varepsilon|^{p-2}u_\varepsilon
	\quad \text{in } \mathbb{R}^N.
\end{equation}
When $\alpha$ and $\beta$ satisfy
\begin{center}
	(a) $\alpha<0$, $\beta\in\mathbb{R}$; \qquad (b) $\alpha>0$, $\beta<-2\sqrt{\alpha}$; \qquad  (c) $\alpha=0$, $\beta<0$,
\end{center}
the essential spectrum of $L:=\Delta^2-\beta\Delta+\alpha$ includes $0$. This creates additional difficulties. To overcome this difficulty, Bonheure, Casteras, and Mandel adapted the dual variational method of Ev\'equoz and Weth \cite{2015-Evequoz&Weth-AIM} to \eqref{2.1}. As in the second-order case, they constructed a resolvent-type operator associated with $L$.

By the factorization (see (\ref{1.6})),
\[
L=\Delta^2-\beta\Delta+\alpha = (-\Delta-a_1)(-\Delta-a_2),
\]
so the analysis reduces to the Helmholtz operators of the form $-\Delta-a$ with $a>0$.
The outgoing fundamental solution $\Phi_a$ of $(-\Delta-a)u=\delta$ is given by
\begin{equation*}
	\Phi_a(x)
	= \frac{i}{4}\left(\frac{\sqrt{a}}{2\pi |x|}\right)^{\frac{N-2}{2}}
	H^{(1)}_{\frac{N-2}{2}}\!\left(\sqrt{a}\,|x|\right).
\end{equation*}
The associated resolvent  is
\begin{equation}\label{2.4}
	(\mathbf{R}_a f)(x)
	:= \lim_{\varepsilon\to0^+} (\mathbf{R}_{a+i\varepsilon} f)(x)
	= \frac{1}{(2\pi)^{N/2}}
	\int_{\mathbb{R}^N}\frac{\widehat{f}(\xi)}{|\xi|^2-(a+i0)}\,e^{ix\cdot\xi}\,\D \xi.
\end{equation}
In fact, \cite[Theorem~6]{2004-Gutierrez-MA} shows that
\[
\|\mathbf{R}_af\|_{L^{p}(\mathbb{R}^N)} \le C \|f\|_{L^{p^\prime}(\mathbb{R}^N)},  \quad f \in \mathcal{S}(\mathbb{R}^N),
\]
and therefore $\mathbf{R}_a$ extends to a bounded linear operator from $L^{p^\prime}(\mathbb{R}^N)$ to $L^{p}(\mathbb{R}^N)$.
Moreover,
\begin{equation}\label{2.5}
	(\mathbf{R}_a f)(x)=(\Phi_a*f)(x)
	=\frac{1}{a}\,\mathbf{R}_1\!\Big(f\!\left(\tfrac{\cdot}{\sqrt{a}}\right)\Big)\!(\sqrt{a}\,x),
	\qquad a>0.
\end{equation}

In accordance with the above factorization, for the fourth-order operator $L$ we introduce
\begin{equation}\label{2.6}
	G := \frac{1}{\sqrt{\beta^2-4\alpha}}\,(\Phi_{a_1}-\Phi_{a_2}),
	\qquad
	\widehat{G}(\xi)
	= \frac{1}{\sqrt{\beta^2-4\alpha}}
	\left(\frac{1}{|\xi|^2-a_1}-\frac{1}{|\xi|^2-a_2}\right).
\end{equation}
It is then straightforward to check that
\[
\Delta^2 G - \beta\,\Delta G + \alpha\, G = \delta,
\]
so that $G$ is a fundamental solution of $L$. Formally, the expression in \eqref{2.6} also makes sense in the regime $a_1,a_2<0$ (see \cite[Proposition~3.13]{2018-Bonheure-SJMA}); however, in the present paper we restrict our attention to the cases $a_1>0>a_2$, $a_1>a_2>0$, and $a_1>0=a_2$. By invoking standard properties of Hankel functions (see~\cite{2019-Denis-Jean-Rainer-JLMS}), we deduce the pointwise bounds
\begin{align}\label{guji}
	\begin{aligned}
		|G(x)| \le
		\begin{cases}
			C\big(|x|^{4-N} + |\log(|x|)|\big), & N \ge 4,\ 0<|x|\le 1, \\[4pt]
			C, & N \in \{2,3\},\ 0<|x|\le 1,\\
			C\,|x|^{\frac{1-N}{2}}, & N \ge 2,\ |x|\ge 1.
		\end{cases}
	\end{aligned}
\end{align}

Moreover, $G$ satisfies
\[
\nabla G(x) - i\sqrt{a_1}\, G(x)\, \frac{x}{|x|}
= o\!\left(|x|^{\frac{1-N}{2}}\right),
\qquad |x|\to\infty,\ a_1 > 0 > a_2,
\]
if $a_1>a_2>0$, as  $|x| \rightarrow \infty$, we have
\begin{equation*}
	\left|\nabla G(x)-\frac{i}{a_1-a_2}\left(\sqrt{a_1} \Phi_{a_1}(x)-\sqrt{a_2} \Phi_{a_2}(x)\right) \frac{x}{|x|}\right|
	= o\left(|x|^{\frac{1-N}{2}}\right) 
\end{equation*}
Note that in the case $a_1>0>a_2$, the functions $\Phi_{a_2}, \Phi_{a_2}^{\prime}$ decrease exponentially and hence much faster than $\Phi_{a_1}, \Phi_{a_1}^{\prime}$ at infinity, so that \eqref{2.6} allows one to deduce the Sommerfeld condition from \eqref{2.4}. Here, $\Phi_{a_2}$ denotes the Green's function of the Schrödinger operator $-\Delta-a_2$. As to the case $a_1>a_2>0$, let us remark that $\sqrt{\beta^2-4\alpha}=a_1-a_2$. Motivated by \eqref{2.4}--\eqref{2.6}, we may now define
\begin{equation}\label{2.7}
	\mathscr{R}f
	:= \lim_{\varepsilon\to0^+}\frac{1}{\sqrt{\beta^2-4\alpha}}
	\big(\mathbf{R}_{a_1+i\varepsilon}f - \mathbf{R}_{a_2+i\varepsilon}f\big).
\end{equation}
According to \cite[Theorems~3.3 and~3.4]{2019-Denis-Jean-Rainer-JLMS}, the operator $\mathscr{R}$ introduced in~\eqref{2.7} admits a bounded extension from $L^{p}(\mathbb{R}^{N})$ to $L^{q}(\mathbb{R}^{N})$ under the following conditions on $p$ and~$q$.

First, assume that $(A_1)$ holds. Then \(\mathscr{R}\) extends to a bounded linear operator
\[
\mathscr{R}:L^p(\mathbb{R}^N)\longrightarrow L^q(\mathbb{R}^N),
\]
that is, there exists a constant \(C>0\) such that
\begin{equation}\label{3.1}
	\|\mathscr{R}f\|_{L^q(\mathbb{R}^N)}
	\le C\,\|f\|_{L^p(\mathbb{R}^N)}
\end{equation}
for all \(f\in L^p(\mathbb{R}^N)\), provided that the exponents \(p,q\in[1,\infty]\) satisfy
\begin{equation}\label{3.2}
	\frac{2}{N+1}\ \le\ \frac1p-\frac1q\ \le\
	\begin{cases}
		1, & N\in\{2,3\},\\[2pt]
		<\,1, & N=4,\\[2pt]
		\dfrac{4}{N}, & N\ge5,
	\end{cases}
	\qquad
	\frac1p>\frac{N+1}{2N},\quad \frac1q<\frac{N-1}{2N}.
\end{equation}
In particular, if \(q=p'\) is the H\"older conjugate of \(p\), then~\eqref{3.1} holds whenever
\[
\begin{cases}
	\dfrac{2(N+1)}{N-1}\ \le\ q\ \le\ \infty,& N\in\{2,3\},\\[6pt]
	\dfrac{10}{3}\ \le\ q\ <\ \infty,& N=4,\\[8pt]
	\dfrac{2(N+1)}{N-1}\ \le\ q\ \le\ \dfrac{2N}{N-4},& N\ge5.
\end{cases}
\]

Next, assume that $(A_2)$ holds. Then \(\mathscr{R}\) admits a bounded extension
\[
\mathscr{R}:L^{p}(\mathbb{R}^{N})\longrightarrow L^{q}(\mathbb{R}^{N}),
\]
for every \(p,q\in[1,\infty]\) such that
\[
\frac{2}{N}\ \le\ \frac{1}{p}\ <\ 1-\frac{1}{q},
\qquad
\frac{1}{p}\ >\ \frac{N+1}{2N},\quad
\frac{1}{q}\ <\ \frac{N-1}{2N},
\]
and
\begin{equation}\label{3.20}
	1-\frac{1}{q}\ \le\
	\begin{cases}
		1, & N=3,\\[2pt]
		1, & N=4,\\[2pt]
		\dfrac{4}{N}, & N>5.
	\end{cases}
\end{equation}
In particular, for \(q=p'\) one obtains the following admissible ranges:
\[
\begin{cases}
	6\ \le\ q\ \le\ \infty, & N=3,\\[4pt]
	4\ \le\ q\ \le\ \infty, & N=4,\\[4pt]
	\dfrac{2N}{N-2}\ \le\ q\ \le\ \dfrac{2N}{N-4}, & N\ge5.
\end{cases}
\]

To establish the subsequent results, we adopt a dual variational framework.
For the scaled problem~\eqref{2.1}, set
\[
v := W_{\varepsilon}^{\,1/p'}\,|u_\varepsilon|^{p-2}u_\varepsilon
\in L^{p'}(\mathbb{R}^{N}),
\]
so that \(v\) satisfies the integral equation
\begin{equation}\label{2.9}
	W_{\varepsilon}^{\,1/p}\,\mathbf{R}\!\bigl(W_{\varepsilon}^{\,1/p} v\bigr)
	= |v|^{p'-2}v \quad \text{in }\mathbb{R}^{N}.
\end{equation}
Here \(\mathbf{R} := \operatorname{Re}(\mathscr{R})\) denotes the real part of the resolvent-type operator \(\mathscr{R}\) associated with the fourth-order operator \(L\) (see~\eqref{2.7}).

Arguing as in \cite[Proposition~4.1]{2019-Denis-Jean-Rainer-JLMS}, one obtains the symmetry identity
\begin{equation}\label{2.8}
	\int_{\mathbb{R}^N} (\mathbf{R} f)\, g\,\D x
	= \int_{\mathbb{R}^N} f\, (\mathbf{R} g)\,\D x,
	\qquad \forall\, f,g\in\mathcal{S}(\mathbb{R}^N).
\end{equation}
Let \(W\in L^{\infty}(\mathbb{R}^N)\) be nonnegative with \(W\not\equiv 0\).
We then introduce the Birman--Schwinger type operator
\[
K_\varepsilon:L^{p'}(\mathbb{R}^N)\to L^{p}(\mathbb{R}^N),\qquad
K_\varepsilon(v):=W_\varepsilon^{\,1/p}\,\mathbf{R}\!\bigl(W_\varepsilon^{\,1/p}v\bigr).
\]
By the symmetry property~\eqref{2.8}, the operator \(K_\varepsilon\) is symmetric with respect to the \(L^{p'}\)–\(L^{p}\) duality, in the sense that for all \(v,w\in L^{p'}(\mathbb{R}^N)\),
\[
\int_{\mathbb{R}^N} v\, K_\varepsilon(w)\,\mathrm{d}x
\;=\; \int_{\mathbb{R}^N} w\, K_\varepsilon(v)\,\mathrm{d}x .
\]
Moreover, \(K_\varepsilon\) is locally compact: for every bounded measurable set \(B\subset\mathbb{R}^N\), the truncated operator
\[
\mathbf{1}_B K_\varepsilon:\ L^{p'}(\mathbb{R}^N)\longrightarrow L^{p}(\mathbb{R}^N)
\]
is compact (see \cite[Proposition~4.1]{2019-Denis-Jean-Rainer-JLMS}).

Equation~\eqref{2.9} is precisely the Euler--Lagrange equation associated with the functional
\(
J_{\varepsilon}\in C^{1}\!\bigl(L^{p'}(\mathbb{R}^{N}), \mathbb{R}\bigr)
\)
defined by
\begin{equation}\label{2.10}
	J_{\varepsilon}(v)
	:=\frac{1}{p'}\int_{\mathbb{R}^{N}}|v|^{p'}\,\mathrm{d}x
	\;-\;\frac{1}{2}\int_{\mathbb{R}^{N}}v\,K_\varepsilon v\,\mathrm{d}x .
\end{equation}
Every critical point of \(J_\varepsilon\) gives rise to a solution of~\eqref{2.1}.
More precisely, for \(v \in L^{p'}(\mathbb{R}^N)\) one has
\begin{equation}\label{eq:EL}
	J_\varepsilon'(v)=0 \quad \Longleftrightarrow \quad
	|v|^{p'-2}v \;=\; W_\varepsilon^{\,1/p}\,\mathbf{R}\!\bigl(W_\varepsilon^{\,1/p}v\bigr).
\end{equation}
If we write \(v=W_\varepsilon^{\,1/p'}|u_\varepsilon|^{p-2}u_\varepsilon\), then~\eqref{eq:EL} is equivalent to the integral formulation
\begin{equation}\label{2.11}
	u_\varepsilon \;=\; \mathbf{R}\!\bigl(W_\varepsilon\,|u_\varepsilon|^{p-2}u_\varepsilon\bigr).
\end{equation}

Under $(A_1)$ or $(A_2)$, $J_\varepsilon$ has the mountain-pass geometry:
\begin{itemize}
	\item[(i)] There exist $\delta>0$ and $\rho\in(0,1)$ such that $J_\varepsilon(v)\ge \delta$ whenever $\|v\|_{L^{p^\prime}(\mathbb{R}^{N})}=\rho$.
	\item[(ii)] There exists $v_{0}\in L^{p^\prime}(\mathbb{R}^{N})$ with $\|v_{0}\|_{L^{p^\prime}(\mathbb{R}^{N})}>1$ and $J_\varepsilon(v_{0})<0$.
\end{itemize}
Accordingly, the associated Nehari set is
\[
\mathcal{N}_\varepsilon := \bigl\{\,v \in L^{p^\prime}(\mathbb{R}^N)\setminus\{0\}: \ J_\varepsilon'(v)\,v=0\,\bigr\},
\]
and it is nonempty. More precisely, set
\[
U_\varepsilon^+ := \Bigl\{\,v \in L^{p^\prime}(\mathbb{R}^N): \ \int_{\mathbb{R}^N}
\bigl(W_\varepsilon^{\,1/p} v\bigr)\,
\mathbf{R}\!\bigl(W_\varepsilon^{\,1/p}v\bigr)\,\mathrm{d}x \;>\; 0 \Bigr\}.
\]
For each $v \in U_\varepsilon^+$, there exists a unique $t_v>0$ such that $t_v v \in \mathcal{N}_\varepsilon$, and
\begin{equation}\label{2.12}
	t_v^{\,2-p^\prime}
	\;=\; \frac{\displaystyle \int_{\mathbb{R}^N} |v|^{p^\prime}\,\mathrm{d}x}
	{\displaystyle \int_{\mathbb{R}^N}
		\bigl(W_\varepsilon^{\,1/p} v\bigr)\,
		\mathbf{R}\!\bigl(W_\varepsilon^{\,1/p} v\bigr)\,\mathrm{d}x}.
\end{equation}
Furthermore, $t_v$ is the unique maximizer of the map $t\mapsto J_\varepsilon(tv)$, and
\[
c_\varepsilon := \inf_{\mathcal{N}_\varepsilon} J_\varepsilon
= \inf_{v\in U_\varepsilon^+} J_\varepsilon(t_v v) \;>\; 0.
\]
For all $v \in \mathcal{N}_\varepsilon$,
\[
c_\varepsilon \le J_\varepsilon(v)
= \Bigl(\frac{1}{p^\prime}-\frac{1}{2}\Bigr)\,\|v\|_{L^{p^\prime}(\mathbb{R}^N)}^{\,p^\prime}.
\]
Hence $0$ is an isolated point of the set $\{v \in L^{p^\prime}(\mathbb{R}^N): J_\varepsilon'(v)\,v=0\}$, and $\mathcal{N}_\varepsilon$ is a nonempty $C^1$ submanifold of $L^{p^\prime}(\mathbb{R}^N)$.

For later use, and following the approach of \cite[Lemma~2.1 and~2.2]{2017-Evequoz-AMPA}, we establish the two lemmas below and deduce that, when $W$ is  constant or $\mathbb{Z}^N$-periodic, the mountain-pass level is attained.

\begin{Lem}\label{lem:PS-basic}
	Let $(v_n)_n\subset L^{p^\prime}(\mathbb{R}^N)$ be a \((PS)\)-sequence for $J_\varepsilon$.
	Then $(v_n)_n$ is bounded, and there exists $v\in L^{p^\prime}(\mathbb{R}^N)$ with
	$J_\varepsilon'(v)=0$ such that, up to a subsequence,
	$v_n \rightharpoonup v$ weakly in $L^{p^\prime}(\mathbb{R}^N)$ and
	\[
	J_\varepsilon(v)\ \le\ \liminf_{n\to\infty} J_\varepsilon(v_n).
	\]
	Moreover, for every bounded measurable set $B\subset\mathbb{R}^N$,
	\[
	\mathbf{1}_B v_n \to \mathbf{1}_B v \quad \text{strongly in } L^{p^\prime}(\mathbb{R}^N).
	\]
\end{Lem}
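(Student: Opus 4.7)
The plan is to combine the standard boundedness argument for a dual mountain-pass functional with the local compactness of $K_\varepsilon$ recalled just before the lemma. Since $p>2$ gives $p'\in(1,2)$ and hence $\tfrac{1}{p'}-\tfrac{1}{2}>0$, the identity
\[
J_\varepsilon(v_n)-\tfrac12\, J_\varepsilon'(v_n)v_n \;=\; \Bigl(\tfrac1{p'}-\tfrac12\Bigr)\|v_n\|_{p'}^{\,p'},
\]
combined with $J_\varepsilon(v_n)=O(1)$ and $J_\varepsilon'(v_n)v_n=o(1)\,\|v_n\|_{p'}$, yields a bound on $\|v_n\|_{p'}$. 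Reflexivity of $L^{p'}(\mathbb{R}^N)$ then provides a subsequence with $v_n\rightharpoonup v$.

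Next I would establish $J_\varepsilon'(v)=0$ and the local strong convergence simultaneously. Using the symmetry of $K_\varepsilon$, the PS condition is equivalent to
\[
\rho_n \;:=\; |v_n|^{p'-2}v_n - K_\varepsilon v_n \;\longrightarrow\; 0 \quad\text{in } L^p(\mathbb{R}^N).
\]
Fix a bounded measurable $B\subset\mathbb{R}^N$. The compactness of $\mathbf{1}_BK_\varepsilon:L^{p'}\to L^p$ gives $\mathbf{1}_BK_\varepsilon v_n\to \mathbf{1}_B K_\varepsilon v$ strongly in $L^p$, so $\mathbf{1}_B|v_n|^{p'-2}v_n\to \mathbf{1}_BK_\varepsilon v$ in $L^p$. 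Composing with the inverse map $z\mapsto |z|^{p-2}z$, whose continuity from $L^p(B)$ into $L^{p'}(B)$ follows from the pointwise inequality
\[
\bigl||a|^{p-2}a-|b|^{p-2}b\bigr|\;\le\; C(|a|+|b|)^{p-2}|a-b|
\]
and H\"older, I obtain $\mathbf{1}_Bv_n\to \mathbf{1}_B|K_\varepsilon v|^{p-2}K_\varepsilon v$ strongly in $L^{p'}$. Uniqueness of weak limits on $B$ forces $\mathbf{1}_B v=\mathbf{1}_B |K_\varepsilon v|^{p-2}K_\varepsilon v$, and exhausting $\mathbb{R}^N$ by balls gives $|v|^{p'-2}v=K_\varepsilon v$ a.e.\ (i.e.\ $J_\varepsilon'(v)=0$), and simultaneously the desired local convergence $\mathbf{1}_B v_n\to \mathbf{1}_B v$ in $L^{p'}(\mathbb{R}^N)$.

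For the lower semicontinuity I would exploit the algebraic simplification that the dual formulation provides on (approximate) critical points. Inserting $|v_n|^{p'-2}v_n=K_\varepsilon v_n+\rho_n$ into $\int v_n K_\varepsilon v_n\,\D x$ and using boundedness of $(v_n)$ in $L^{p'}$ gives
\[
\int_{\mathbb{R}^N} v_n\, K_\varepsilon v_n\,\D x \;=\; \|v_n\|_{p'}^{\,p'}+o(1),
\]
so that $J_\varepsilon(v_n)=\bigl(\tfrac1{p'}-\tfrac12\bigr)\|v_n\|_{p'}^{\,p'}+o(1)$. The same identity for the critical point $v$ (using $J_\varepsilon'(v)=0$, which yields $\int v\, K_\varepsilon v\,\D x=\|v\|_{p'}^{\,p'}$) gives $J_\varepsilon(v)=\bigl(\tfrac1{p'}-\tfrac12\bigr)\|v\|_{p'}^{\,p'}$, and weak lower semicontinuity of the $L^{p'}$-norm finishes the proof.

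The main obstacle is the absence of global compactness of $K_\varepsilon$: one cannot pass to the limit in $|v_n|^{p'-2}v_n-K_\varepsilon v_n\to 0$ directly on all of $\mathbb{R}^N$, so the identification of $v$ as a critical point must be routed through the localization-and-exhaustion procedure above; once the weak and a.e.\ limits are reconciled on every bounded set, the rest reduces to the identity $J_\varepsilon|_{\{J_\varepsilon'=0\}}=\bigl(\tfrac1{p'}-\tfrac12\bigr)\|\cdot\|_{p'}^{\,p'}$ and weak lower semicontinuity of the norm.
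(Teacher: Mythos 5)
Your proof is correct, and it is essentially the same argument the paper relies on (the paper only cites Ev\'equoz's Lemma~2.1 without reproducing the details): boundedness from the identity $J_\varepsilon(v_n)-\tfrac12 J_\varepsilon'(v_n)v_n=(\tfrac1{p'}-\tfrac12)\|v_n\|_{p'}^{p'}$, identification of the weak limit and local strong $L^{p'}$-convergence via the local compactness of $\mathbf{1}_B K_\varepsilon$ and the continuity of the inverse Nemytskii map, and the energy inequality via $J_\varepsilon(v_n)=(\tfrac1{p'}-\tfrac12)\|v_n\|_{p'}^{p'}+o(1)$ together with weak lower semicontinuity of the norm. No gaps.
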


\begin{Lem}\label{lem:MP}
	\begin{itemize}
		\item[(i)] The level $c_\varepsilon$ coincides with the mountain-pass value:
		\[
		c_\varepsilon \;=\; \inf_{\gamma\in \Gamma}\ \max_{t\in[0,1]} J_\varepsilon(\gamma(t)),
		\qquad
		 \Gamma:=\Bigl\{\gamma\in C([0,1],L^{p^\prime}(\mathbb{R}^N)):\ \gamma(0)=0,\ J_\varepsilon(\gamma(1))<0\Bigr\}.
		\]
		\item[(ii)] If $c_\varepsilon$ is achieved, then
		\[
		c_\varepsilon \;=\; \min\bigl\{\, J_\varepsilon(v):\ v\in L^{p^\prime}(\mathbb{R}^N)\setminus\{0\},\ J_\varepsilon'(v)=0 \,\bigr\}.
		\]
		\item[(iii)] If $W_\varepsilon$ is constant or $\mathbb{Z}^N$-periodic, then $c_\varepsilon$ is attained.
	\end{itemize}
\end{Lem}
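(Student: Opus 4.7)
I plan to handle (i) and (ii) by standard Nehari/mountain-pass equivalences tailored to the fact that $p'<2$, and reserve the real work for (iii), which I view as a concentration-compactness argument exploiting translation invariance together with the local compactness provided by Lemma~\ref{lem:PS-basic}.

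For (i), one direction is immediate: for every $v\in U_\varepsilon^+$, after extending the fiber $t\mapsto tv$ to some $T>t_v$ with $J_\varepsilon(Tv)<0$, reparameterization produces a path in $\Gamma$ whose maximum is attained at $t_v v$, giving $\inf_\Gamma\max J_\varepsilon\le c_\varepsilon$. For the reverse, I would show every $\gamma\in\Gamma$ meets $\mathcal{N}_\varepsilon$ by tracking the sign of $\Psi(t):=J_\varepsilon'(\gamma(t))\gamma(t)=\|\gamma(t)\|_{p'}^{p'}-\int\gamma(t)K_\varepsilon\gamma(t)\,\mathrm{d}x$. Since $p'<2$, for $\|v\|_{p'}$ small one has $|\int v\,K_\varepsilon v\,\mathrm{d}x|\lesssim \|v\|_{p'}^{2}\ll \|v\|_{p'}^{\,p'}$, so $\Psi(\gamma(t))>0$ for $t>0$ small; while $J_\varepsilon(\gamma(1))<0$ forces $\int\gamma(1)K_\varepsilon\gamma(1)\,\mathrm{d}x>\tfrac{2}{p'}\|\gamma(1)\|_{p'}^{\,p'}$, hence $\Psi(\gamma(1))<(1-\tfrac{2}{p'})\|\gamma(1)\|_{p'}^{\,p'}<0$. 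The intermediate value theorem supplies $t_0\in(0,1)$ with $\gamma(t_0)\in\mathcal{N}_\varepsilon$, so $\max_{t}J_\varepsilon(\gamma(t))\ge c_\varepsilon$.

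For (ii), I would verify that $\mathcal{N}_\varepsilon$ is a natural constraint. A direct computation yields $\Psi'(v)v=p'\|v\|_{p'}^{\,p'}-2\int v K_\varepsilon v\,\mathrm{d}x=(p'-2)\|v\|_{p'}^{\,p'}\ne 0$ on $\mathcal{N}_\varepsilon$, so if $v^{*}$ minimizes $J_\varepsilon$ on $\mathcal{N}_\varepsilon$, the Lagrange relation $J_\varepsilon'(v^{*})=\lambda\Psi'(v^{*})$ tested against $v^{*}$ forces $\lambda=0$ and hence $J_\varepsilon'(v^{*})=0$. Since every nontrivial free critical point automatically satisfies the Nehari condition $J_\varepsilon'(v)v=0$, the constrained minimum coincides with the infimum over all nontrivial critical points.

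For (iii), the strategy is Ekeland on $\mathcal{N}_\varepsilon$ combined with naturality from (ii), producing a bounded Palais--Smale sequence $(v_n)\subset L^{p'}(\mathbb{R}^N)$ for $J_\varepsilon$ at level $c_\varepsilon$. By Lemma~\ref{lem:PS-basic}, $v_n\rightharpoonup v$ with $J_\varepsilon'(v)=0$, and $\mathbf{1}_Bv_n\to\mathbf{1}_Bv$ strongly in $L^{p'}$ on every bounded set $B$. If $v\ne 0$, then $v\in\mathcal{N}_\varepsilon$ and weak lower semicontinuity gives $J_\varepsilon(v)=c_\varepsilon$. Otherwise I would rule out vanishing: assuming $\sup_{y\in\mathbb{R}^N}\int_{B_R(y)}|v_n|^{p'}\,\mathrm{d}x\to 0$ for every $R>0$, a covering of $\mathbb{R}^N$ by unit cubes combined with the off-diagonal decay of Lemma~\ref{lem:offdiag} and the mapping $K_\varepsilon:L^{p'}\to L^{p}$ yields $\int v_n K_\varepsilon v_n\,\mathrm{d}x\to 0$; but the Nehari identity $\|v_n\|_{p'}^{\,p'}=\int v_n K_\varepsilon v_n\,\mathrm{d}x$ then gives $\|v_n\|_{p'}\to 0$, contradicting $J_\varepsilon(v_n)=(\tfrac{1}{p'}-\tfrac{1}{2})\|v_n\|_{p'}^{\,p'}\to c_\varepsilon>0$. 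Hence there exist $\delta,R>0$ and $y_n$ with $\int_{B_R(y_n)}|v_n|^{p'}\ge\delta$; by $\mathbb{Z}^N$-periodicity of $W_\varepsilon$ (the constant case being a fortiori) I round to $y_n\in\mathbb{Z}^N$ and set $\tilde v_n:=v_n(\cdot+y_n)$, still a Palais--Smale sequence at level $c_\varepsilon$. Applying Lemma~\ref{lem:PS-basic} to $\tilde v_n$, the local strong convergence on $B_R(0)$ forces the weak limit $\tilde v$ to be nontrivial, so $\tilde v\in\mathcal{N}_\varepsilon$ and $J_\varepsilon(\tilde v)=c_\varepsilon$.

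The main technical obstacle is precisely the vanishing step in (iii). Unlike the classical Schrödinger setting where the compactness of $H^{1}_{\mathrm{loc}}\hookrightarrow L^{p'}_{\mathrm{loc}}$ makes this automatic, here the ambient space is $L^{p'}(\mathbb{R}^N)$ with $p'<2$, and the quadratic form $\int v K_\varepsilon v\,\mathrm{d}x$ is long-range; controlling it by local $L^{p'}$-mass forces one to exploit the off-diagonal decay of the fourth-order resolvent kernel $G$ (via Lemma~\ref{lem:offdiag}), and this is where I expect the bulk of the technical effort to be concentrated.
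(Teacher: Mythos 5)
Parts (i) and (ii) of your plan are correct and essentially the standard fiber/natural-constraint arguments; this is the same route the paper takes (it establishes the lemma by following Ev\'equoz \cite[Lemmas~2.1 and~2.2]{2017-Evequoz-AMPA}, which rest on Ev\'equoz--Weth \cite{2015-Evequoz&Weth-AIM}). The only detail worth adding in (i) is that the first sign change of $t\mapsto J_\varepsilon'(\gamma(t))\gamma(t)$ occurs at a nonzero point; this follows since $\Psi(v):=\|v\|_{p'}^{p'}-\int vK_\varepsilon v\,\mathrm{d}x\ge 0$ on a small ball with equality only at $v=0$.

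The genuine gap is the vanishing step in (iii). You claim that if $\sup_{y}\int_{B_R(y)}|v_n|^{p'}\mathrm{d}x\to 0$, then a covering by unit cubes together with Lemma~\ref{lem:offdiag} yields $\int v_nK_\varepsilon v_n\,\mathrm{d}x\to 0$. Summing the pairwise estimate of Lemma~\ref{lem:offdiag} over cube pairs gives a bound of the form $\sum_{j\ne k}d(Q_j,Q_k)^{-\lambda_p}a_ja_k$ with $a_j:=\|v_n\|_{L^{p'}(Q_j)}$, and with only $(a_j)\in\ell^{p'}$ (plus $\|a\|_{\infty}\to0$) this sum is controlled (via Young's inequality for sequences) only when the truncated kernel $|m|^{-\lambda_p}$ lies in $\ell^{p/2}(\mathbb{Z}^N)$, i.e.\ when $\lambda_p>2N/p$, equivalently $p>\tfrac{2(3N+1)}{N-1}$. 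This fails on a large part of the admissible range (e.g.\ $N=2$ and $6<p\le 14$, or $N=5$ and $3<p\le 8$), and the loss is real, not an artifact: spreading mass evenly over $m^N$ cubes, $a_j=m^{-N/p'}$, is a vanishing configuration with $\|a\|_{\ell^{p'}}=1$ for which the summed pairwise bound grows like $m^{\,2N/p-\lambda_p}\to\infty$. So the triangle-inequality/covering route cannot prove the implication. The implication itself is true, but it is precisely the nonvanishing theorem of Ev\'equoz--Weth \cite[Sec.~3]{2015-Evequoz&Weth-AIM} (fourth-order version in \cite{2019-Denis-Jean-Rainer-JLMS}), whose proof requires Stein--Tomas/restriction-type bounds for the part of the kernel frequency-localized near the spheres $|\xi|=\sqrt{a_i}$, not merely the pointwise decay encoded in Lemma~\ref{lem:offdiag}; indeed the paper cites this nonvanishing theorem as ``the key tool for the nonlocal quadratic term,'' while Lemma~\ref{lem:offdiag} is used for a different purpose (the interaction estimate behind Lemma~\ref{lem:PS-below-cinf}). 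Once the nonvanishing theorem is available, your translation argument closes (iii); alternatively, for constant $W$ one can conclude directly from Lemma~\ref{lem:profile}: a Palais--Smale sequence at level $c_\varepsilon>0$ decomposes into finitely many nontrivial critical points whose energies, each at least $c_\varepsilon$, sum to $c_\varepsilon$, so there is exactly one profile and it attains $c_\varepsilon$.
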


Based on the above, if $v\in L^{p^\prime}(\mathbb{R}^N)\setminus\{0\}$ is a critical point at the mountain-pass level,
i.e.\ $J_\varepsilon'(v)=0$ and $J_\varepsilon(v)=c_\varepsilon$, then the function
\begin{equation}\label{eq:dual-u}
	u(x)\;=\; k^{\frac{4}{p-2}}\ \mathbf{R}\!\Big(W_\varepsilon^{\,1/p} v\Big)(k x),
	\qquad k=\varepsilon^{-1},
\end{equation}
is called a dual ground state solution of \eqref{2.1}.
More generally, if $v$ is any nontrivial critical point of $J_\varepsilon$, then the corresponding $u$ given by \eqref{eq:dual-u}
is called a dual solution of \eqref{2.1}.

\subsection{Representation lemma and the Palais--Smale condition}

We now study Palais--Smale sequences for $J_\varepsilon$ in more detail and first prove a representation lemma in the case where the coefficient $W$ is a positive constant. The key tool for the nonlocal quadratic term is the nonvanishing theorem in~\cite[Sec.~3]{2015-Evequoz&Weth-AIM}. For simplicity (and since the next result is independent of $\varepsilon$), we drop the subscript~$\varepsilon$.

\begin{Lem}\label{lem:profile}
	Assume that $W\equiv W(0)>0$ on $\mathbb{R}^N$.
	Let $(v_n)_n\subset L^{p^\prime}(\mathbb{R}^N)$ be a \((PS)_d\)-sequence for $J$ at level $d>0$.
	Then there exist an integer $m\ge1$, nontrivial critical points $w^{(1)},\dots,w^{(m)}$ of $J$,
	and sequences $(x_n^{(1)}),\dots,(x_n^{(m)})\subset\mathbb{R}^N$ such that ,  for a subsequence , as $n\to\infty$, we have
	\begin{equation}\label{eq:profile}
		\begin{cases}
			\ \ \ \|\,v_n - \sum_{j=1}^m w^{(j)}(\cdot - x_n^{(j)})\,\|_{L^{p^\prime}(\mathbb{R}^N)} \to 0, \\[2pt]
			\ |x_n^{(i)} - x_n^{(j)}| \to \infty \quad (i\neq j),\\[2pt]
			\ \ \ \sum_{j=1}^m J\!\big(w^{(j)}\big) = d.
		\end{cases}
	\end{equation}
\end{Lem}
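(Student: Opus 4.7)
The argument follows the Benci--Cerami profile decomposition, adapted to the dual variational setting of Ev\'equoz--Weth and to the fourth--order resolvent $\mathbf{R}$. The key enabling feature is that $W$ being constant makes both $K$ and $J$ translation invariant, so bubbles at diverging centers are all critical profiles of the same limit functional. By Lemma~\ref{lem:PS-basic}, $(v_n)$ is bounded in $L^{p'}$, and the identity $J'(v_n)v_n=\|v_n\|_{p'}^{p'}-\int_{\mathbb{R}^N} v_n K v_n\,\mathrm{d}x\to 0$ together with $J(v_n)\to d>0$ forces $\int_{\mathbb{R}^N} v_n K v_n\,\mathrm{d}x$ to stay bounded away from $0$. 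I then invoke a nonvanishing argument in the spirit of \cite[Sec.~3]{2015-Evequoz&Weth-AIM}, whose second--order proof transfers here via the mapping properties of $\mathbf{R}$ and the kernel bounds \eqref{guji}, to find $r>0$ and $(x_n^{(1)})\subset\mathbb{R}^N$ with $\int_{B_r(x_n^{(1)})}|v_n|^{p'}\,\mathrm{d}x\ge\delta>0$. Passing to a subsequence, $v_n(\cdot+x_n^{(1)})\rightharpoonup w^{(1)}\neq 0$ in $L^{p'}$, and Lemma~\ref{lem:PS-basic} applied after translation yields $J'(w^{(1)})=0$.

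Next, I set $\widetilde v_n(x):=v_n(x)-w^{(1)}(x-x_n^{(1)})$ and show that $(\widetilde v_n)$ is again a $(PS)_{d-J(w^{(1)})}$ sequence. The Brezis--Lieb lemma gives $\|\widetilde v_n\|_{p'}^{p'}=\|v_n\|_{p'}^{p'}-\|w^{(1)}\|_{p'}^{p'}+o(1)$. Expanding the bilinear form via the symmetry identity \eqref{2.8} produces diagonal and cross terms: translation invariance of $K$ reproduces the $w^{(1)}$ diagonal term, while the cross term vanishes since $v_n(\cdot+x_n^{(1)})-w^{(1)}\rightharpoonup 0$ in $L^{p'}$ and $Kw^{(1)}\in L^p$ by boundedness of $\mathbf{R}\colon L^{p'}\to L^p$. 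Hence $J(\widetilde v_n)=J(v_n)-J(w^{(1)})+o(1)$ and $J'(\widetilde v_n)\to 0$. Iterating produces further nontrivial bubbles $w^{(k)}$ at centers $x_n^{(k)}$; the separation $|x_n^{(i)}-x_n^{(j)}|\to\infty$ for $i\neq j$ follows because otherwise, along a subsequence with $x_n^{(k+1)}-x_n^{(j)}\to y_0$, the residue at step $k$ translated by $x_n^{(j)}$ would already have weak limit $0$, contradicting the extraction of $w^{(k+1)}\neq 0$. Each nontrivial critical point satisfies $J(w^{(j)})=\bigl(\tfrac{1}{p'}-\tfrac12\bigr)\|w^{(j)}\|_{p'}^{p'}\ge c_\star>0$ by the Nehari characterization and the mountain--pass geometry, and $\sum_{j=1}^k J(w^{(j)})\le d+o(1)$ is bounded, so the procedure terminates after finitely many, say $m$, steps; the tail then goes to $0$ strongly in $L^{p'}$, and the energy identity $\sum_{j=1}^m J(w^{(j)})=d$ follows by telescoping.

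The main obstacle is controlling the nonlocal quadratic form under diverging translations. Because the fourth--order kernel $G$ decays only like $|x|^{(1-N)/2}$ and is therefore not integrable, classical Brezis--Lieb reasoning cannot handle the cross interaction $\int_{\mathbb{R}^N} w^{(i)}(\cdot-x_n^{(i)})\,K\bigl(w^{(j)}(\cdot-x_n^{(j)})\bigr)\,\mathrm{d}x$ between bubbles with widely separated centers. This is exactly where the off--diagonal decay estimate of Lemma~\ref{lem:offdiag} is essential: it forces these nonlocal cross terms to $0$ whenever $|x_n^{(i)}-x_n^{(j)}|\to\infty$, and thereby makes the full multi--bubble energy identity compatible with the fourth--order resolvent.
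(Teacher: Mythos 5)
Your proposal is correct and follows essentially the same Benci--Cerami/Ev\'equoz--Weth profile-decomposition argument that the paper itself delegates to the reference (Ev\'equoz 2017, Lemma~2.3): boundedness plus nonvanishing to extract a first bubble, translation invariance (since $W$ is constant) to identify it as a critical point, Brezis--Lieb splitting of the $p'$-term and of the nonlocal quadratic form for the remainder, iteration, and termination via the uniform positive lower bound on the energy of nontrivial critical points. One minor remark: the bubble--bubble cross terms already vanish because diverging translates of a fixed $L^{p'}$ profile converge weakly to zero while $Kw^{(i)}\in L^{p}$, so Lemma~\ref{lem:offdiag} is not actually needed at this point (it is required later, in the Palais--Smale analysis, where the interacting pieces are not fixed profiles), although your truncation-based use of it is also admissible.
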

\begin{proof}
	The proof follows \cite[Lemma~2.3]{2017-Evequoz-AMPA}; therefore, the details are omitted.
\end{proof}

We next address the Palais--Smale condition for $J_\varepsilon$.
If $W(x)\to 0$ as $|x|\to\infty$, then the condition holds at every energy level,
i.e.\ every Palais--Smale sequence admits a convergent subsequence (see \cite[Sec.~5]{2015-Evequoz&Weth-AIM}).

To deal with the case
\begin{equation}
	W_\infty \;:=\; \limsup_{|x|\to\infty} W(x) \;>\; 0,
\end{equation}
consider the limiting functional
\begin{equation}
	J_\infty(v)
	\;=\; \frac{1}{p^\prime} \int_{\mathbb{R}^N} |v|^{p^\prime}\,\mathrm{d}x
	\;-\; \frac{1}{2} \int_{\mathbb{R}^N} \bigl(W_\infty^{\,1/p} v\bigr)\,
	\mathbf{R}\!\bigl(W_\infty^{\,1/p} v\bigr)\,\mathrm{d}x,
	\qquad v\in L^{p^\prime}(\mathbb{R}^N).
\end{equation}
Its Nehari manifold is
\[
\mathcal{N}_\infty
:= \bigl\{\, v\in L^{p^\prime}(\mathbb{R}^N)\setminus\{0\}\ :\ J_\infty'(v)\,v=0 \,\bigr\}.
\]
Since $W_\infty$ is a constant, Lemma~\ref{lem:MP}(iii) yields that
\[
c_\infty \;:=\; \inf_{\mathcal{N}_\infty} J_\infty
\]
is achieved and coincides with the least energy among nontrivial critical points of $J_\infty$.

The final result of this section shows that the Palais--Smale condition holds for $J_\varepsilon$ on the Nehari manifold $\mathcal{N}_\varepsilon$ at all energy levels strictly below $c_\infty$. The proof is inspired by the arguments of Cingolani and Lazzo~\cite{1997-Cingolani&Lazzo-TMNA,2000-Cingolani&Lazzo-JDE}. The new difficulty lies in the nonlocal quadratic term of the energy functional, which produces a nonzero interaction between functions with disjoint supports. To overcome this, we first derive an estimate for this interaction in terms of the distance between the supports, based on the large-$|x|$ asymptotic expansion of the fundamental solution introduced in~\cite[Sec.~3]{2015-Evequoz&Weth-AIM}. With this estimate at hand, the Palais--Smale condition on $\mathcal{N}_\varepsilon$ for levels below $c_\infty$ follows.

\begin{Lem}\label{lem:offdiag}
	Assume that $(A_1)$ hold.
	There exists a constant $C=C(N,p)>0$ such that for any $R>0$, $r\ge 1$, and $u,v\in L^{p^\prime}(\mathbb{R}^N)$ with $\operatorname{supp}(u)\subset B_R$ and $\operatorname{supp}(v)\subset \mathbb{R}^N\setminus B_{R+r}$, one has
	\begin{equation}\label{eq:offdiag}
		\left|\int_{\mathbb{R}^N} u\, \mathscr{R}v\, \mathrm{d}x\right|
		\;\le\; C\, r^{-\lambda_p}\, \|u\|_{L^{p^\prime}(\mathbb{R}^N)}\, \|v\|_{L^{p^\prime}(\mathbb{R}^N)},
		\qquad
		\lambda_p=\frac{N-1}{2}-\frac{N+1}{p}.
	\end{equation}
\end{Lem}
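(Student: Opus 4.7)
The plan is to reformulate the bilinear estimate as an operator bound on an auxiliary linear map and then apply Riesz--Thorin interpolation between two extreme bounds whose geometric mean reproduces the target exponent. Concretely, I would introduce
\begin{equation*}
T_r w := \mathbf{1}_{B_R}\, \mathscr{R}\bigl( \mathbf{1}_{\mathbb{R}^N \setminus B_{R+r}}\, w \bigr),\qquad w\in L^{p'}(\mathbb{R}^N).
\end{equation*}
For the $u,v$ of the lemma both cutoffs are transparent, so by H\"older's inequality the claim reduces to the operator bound $\|T_r\|_{L^{p'} \to L^p} \le C\, r^{-\lambda_p}$.

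I would then establish two elementary estimates on $T_r$. Setting $p_* := 2(N+1)/(N-1)$, the pair $(p_*',p_*)$ sits exactly at the lower endpoint of \eqref{3.2} since $1/p_*' - 1/p_* = 2/(N+1)$, so \eqref{3.1} yields the uniform (in $r$ and $R$) bound $\|T_r v\|_{L^{p_*}} \le C\, \|v\|_{L^{p_*'}}$. For the opposite end I would exploit the far-field decay recorded in the third branch of \eqref{guji}: whenever $x\in B_R$ and $y\in\operatorname{supp}(v)\subset \mathbb{R}^N\setminus B_{R+r}$, the distance satisfies $|x-y|\ge r\ge 1$, so $|G(x-y)|\le C\,|x-y|^{(1-N)/2}\le C\,r^{-(N-1)/2}$ and therefore
\begin{equation*}
|T_r v(x)| \le C\, r^{-(N-1)/2}\, \|v\|_{L^1},\qquad \text{that is,}\ \|T_r\|_{L^1\to L^\infty}\le C\,r^{-(N-1)/2}.
\end{equation*}

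Finally, Riesz--Thorin interpolation between the pairs $(L^{p_*'},L^{p_*})$ and $(L^1,L^\infty)$ at parameter $\theta=(p-p_*)/p\in[0,1)$, determined by $1/p'=(1-\theta)/p_*'+\theta$ and $1/p=(1-\theta)/p_*$, gives
\begin{equation*}
\|T_r\|_{L^{p'}\to L^p}\le C\,\bigl(r^{-(N-1)/2}\bigr)^{\theta}=C\,r^{-\theta(N-1)/2},
\end{equation*}
and a short computation using the identity $p_*(N-1)/2=N+1$ turns the exponent into $\theta(N-1)/2=(N-1)/2-(N+1)/p=\lambda_p$, closing the argument. The one piece that deserves explicit verification is that the endpoint pair $(p_*',p_*)$ meets all the side conditions in \eqref{3.2}, in particular $1/p_*'>(N+1)/(2N)$ (which reduces to $N>1$); apart from this routine check, the proof is a clean interpolation whose two inputs were identified above, and the match between the Stein--Tomas endpoint and the trivial far-field bound is exactly what forces the conjugate exponent $\lambda_p$ to appear.
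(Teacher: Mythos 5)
Your proposal is correct, and it takes a genuinely different route from the paper. The paper decomposes the kernel in frequency, $G=G_1+G_2$ with $\widehat{G_1}$ supported near the singular sphere $|\xi|=\sqrt{a_1}$, estimates the $G_2$-part by pointwise kernel decay plus Young's inequality, and handles the $G_1$-part via the truncated-kernel estimate of Ev\'equoz--Weth (\cite[Prop.~3.3]{2015-Evequoz&Weth-AIM}); you instead interpolate the single operator $T_r=\mathbf{1}_{B_R}\mathscr{R}(\mathbf{1}_{\mathbb{R}^N\setminus B_{R+r}}\,\cdot\,)$ between the endpoint resolvent bound at $p_*=\tfrac{2(N+1)}{N-1}$ (which is admissible in \eqref{3.2}: $1/p_*'-1/p_*=\tfrac{2}{N+1}$, $1/p_*'=\tfrac{N+3}{2(N+1)}>\tfrac{N+1}{2N}$, $1/p_*=\tfrac{N-1}{2(N+1)}<\tfrac{N-1}{2N}$) and the trivial $L^1\to L^\infty$ bound $Cr^{-(N-1)/2}$ coming from the far-field branch of \eqref{guji} together with $|x-y|\ge r$. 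Your exponent arithmetic is right: with $\theta=1-p_*/p$ the interpolated pair is exactly $(p',p)$ and $\theta\tfrac{N-1}{2}=\tfrac{N-1}{2}-\tfrac{N+1}{p}=\lambda_p$, uniformly in $R$. Two routine points deserve a sentence in a written-up version: Riesz--Thorin should be applied on a dense class (e.g.\ simple or Schwartz functions supported in $\mathbb{R}^N\setminus B_{R+r}$) on which $\mathscr{R}$ is identified with convolution against $G$ (this identification for $L^1\cap L^{p_*'}$ data follows since $G\in L^1+L^\infty$ by \eqref{guji}), and the constant inherits a dependence on $\alpha,\beta$ through $a_1,a_2$, exactly as in the paper's proof. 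What your argument buys is brevity and the fact that it uses only facts already quoted in Section~2 (\eqref{3.1} at the endpoint and \eqref{guji}); what the paper's decomposition buys is flexibility: it is precisely the $G_1/G_2$ splitting that lets the subsequent Remark extend the lemma with the \emph{same} rate $\lambda_p$ to the case $(A_2)$, where the endpoint $L^{p_*'}\to L^{p_*}$ bound for $\mathscr{R}$ is unavailable (the admissible dual exponents start at $\tfrac{2N}{N-2}$) and a direct interpolation from that endpoint would only yield the weaker rate $r^{-\left(\frac{N-1}{2}-\frac{N(N-1)}{(N-2)p}\right)}$. Since the statement you were asked to prove assumes $(A_1)$, this does not affect the correctness of your proof.
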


\begin{proof}
	We give details for the case $a_1>0>a_2$ (the case $a_1>a_2>0$ requires only minor modifications).
	Let $G$ denote the fundamental solution of $L$ and let $\mathscr{R}$ be the analytic operator given (on Schwartz functions) by convolution with $G$. Since $R$ is the real part of $\mathscr{R}$ and $u,v$ are real-valued, it suffices to establish the estimate for Schwartz functions and then pass to the general case by density. Set $M_{R+r}:=\mathbb{R}^N\setminus B_{R+r}$,
	and take $u,v\in\mathcal{S}(\mathbb{R}^N)$ with $\operatorname{supp}(u)\subset B_R$ and $\operatorname{supp}(v)\subset M_{R+r}$.
	By the symmetry of the kernel of $\mathscr{R}$ and Hölder's inequality,
	\begin{equation}\label{fenjie}
		\left|\int_{\mathbb{R}^N} u\, \mathscr{R}v\, \mathrm{d}x\right|
		=\left|\int_{\mathbb{R}^N} v\, \mathscr{R}u\, \mathrm{d}x\right|
		\le \|v\|_{L^{p^\prime}(\mathbb{R}^N)}\, \|G*u\|_{L^p(M_{R+r})}.
	\end{equation}
	
	Choose $\psi\in\mathcal{S}(\mathbb{R}^N)$ with $\widehat{\psi}\in C_c^\infty(\mathbb{R}^N)$, $0\le \widehat{\psi}\le 1$, and
	\begin{equation*}
		\widehat{\psi}(\xi)=
		\begin{cases}
			1, & \big||\xi|-\sqrt{a_1}\big|\le \dfrac{\sqrt{a_1}}{6},\\[4pt]
			0, & \big||\xi|-\sqrt{a_1}\big|\ge \dfrac{\sqrt{a_1}}{4}.
		\end{cases}
	\end{equation*}
	Set
	\[
	G_1:=(2\pi)^{-N/2}\,(\psi*G),\qquad G_2:=G-G_1.
	\]
	From \eqref{guji} we have $|G(x)|\le C|x|^{\frac{1-N}{2}}$ for $|x|\ge 1$. Using $\psi\in\mathcal{S}$ and smoothing by convolution, it follows that
	\begin{equation}\label{eq:G1-decay}
		|G_1(x)|\le C(1+|x|)^{\frac{1-N}{2}}\quad \forall\,x\in\mathbb{R}^N.
	\end{equation}
	Again, Furthermore, thanks to \eqref{guji} and \eqref{eq:G1-decay}, we deduce
	\begin{equation}\label{g21}
		|G_2(x)|\le
		\begin{cases}
			C|x|^{4-N}, & N>4,\\[2pt]
			C\big(1+|\log|x||\big), & N=4,\\[2pt]
			C, & N\in\{2,3\}.
		\end{cases}
		\quad \text{ for } |x|\le 1
	\end{equation}
	Moreover, since $\widehat{G_2}=(1-\widehat{\psi})\widehat{G}$ and, when $a_2<0$, $\widehat{G}$ is given by \eqref{2.6}, one sees that for any multi-index $\gamma$ with $|\gamma|\ge N-3$ we have $\partial^\gamma \widehat{G_2}\in L^1(\mathbb{R}^N)$. Consequently, for all $x\in\mathbb{R}^N$,
	\begin{equation}\label{g22}
		|G_2(x)|\le
		\begin{cases}
			C\min\{|x|^{4-N},\,|x|^{-N}\}, & N>4,\\[2pt]
			C\min\{1+|\log|x||,\,|x|^{-N}\}, & N=4,\\[2pt]
			C\min\{1,\,|x|^{-N}\}, & N\in\{2,3\}.
		\end{cases}
	\end{equation}
	
	Since $\operatorname{supp}(u)\subset B_R$, we obtain
	\[
	\begin{aligned}
		\|G_2*u\|_{L^p(M_{R+r})}
		&\le \Bigg[\int_{|x|\ge R+r}\!\!\Big(\int_{|y|\le R}\! |G_2(x-y)|\,|u(y)|\,\mathrm{d}y\Big)^p \mathrm{d}x\Bigg]^{1/p}\\
		&\le \Bigg[\int_{\mathbb{R}^N}\!\Big(\int_{|x-y|\ge r}\! |G_2(x-y)|\,|u(y)|\,\mathrm{d}y\Big)^p \mathrm{d}x\Bigg]^{1/p}\\
		&= \big\|\big(\mathbf{1}_{M_r}|G_2|\big)*|u|\big\|_{L^p(\mathbb{R}^N)}
		\;\le\; \|\mathbf{1}_{M_r}G_2\|_{L^{p/2}(\mathbb{R}^N)}\,\|u\|_{L^{p^\prime}(\mathbb{R}^N)}.
	\end{aligned}
	\]
	Using \eqref{g22} for $|x|\ge r\ge 1$,
	\[
	\|\mathbf{1}_{M_r}G_2\|_{L^{p/2}(\mathbb{R}^N)}
	\le C\left(\int_{M_r} |x|^{- \frac{Np}{2}} \,\mathrm{d}x\right)^{\frac{2}{p}}
	\le C\, r^{-\frac{N(p-2)}{p}}
	\le C\, r^{-\lambda_p},
	\]
	whence
	\begin{equation}\label{G2}
		\|G_2*u\|_{L^p(M_{R+r})} \le C\, r^{-\lambda_p}\, \|u\|_{L^{p^\prime}(\mathbb{R}^N)}.
	\end{equation}
For $G_1$, fix a radial $\phi \in \mathcal{S}\left(\mathbb{R}^N\right)$ with radial $\widehat{\phi} \in C_c^{\infty}\left(\mathbb{R}^N\right)$, $0 \leq \widehat{\phi} \leq 1$, $\widehat{\phi}(\xi)=1$ when $\big||\xi|-\sqrt{a_1}\big| \leq \dfrac{\sqrt{a_1}}{4}$ and $\widehat{\phi}(\xi)=0$ when $\big||\xi|-\sqrt{a_1}\big| \geq \dfrac{\sqrt{a_1}}{2}$. Let $\bar{u}:=\phi * u \in \mathcal{S}\left(\mathbb{R}^N\right)$. By construction $\widehat{G_1}\,\widehat{\phi}=\widehat{G_1}$, hence
 Let $\tilde u:=\phi*u\in\mathcal{S}(\mathbb{R}^N)$. By construction $\widehat{G_1}\widehat{\phi}=\widehat{G_1}$, hence
	\[
	G_1*u=(2\pi)^{-N/2}\,(G_1*\tilde u).
	\]
	Decompose
	\[
	G_1*\tilde u=\big[\mathbf{1}_{B_{r/2}} G_1\big]*\tilde u+\big[\mathbf{1}_{M_{r/2}} G_1\big]*\tilde u,
	\]
	and set $g_r:=\big[\mathbf{1}_{B_{r/2}} G_1\big]*\phi$. Since $\operatorname{supp}(u)\subset B_R$, arguing as above yields
	\[
	\|\big[\mathbf{1}_{B_{r/2}} G_1\big]*\tilde u\|_{L^p(M_{R+r})}
	= \|g_r*u\|_{L^p(M_{R+r})}
	\le \|\mathbf{1}_{M_r} g_r\|_{L^{p/2}(\mathbb{R}^N)}\,\|u\|_{L^{p^\prime}(\mathbb{R}^N)}.
	\]
	Using \eqref{eq:G1-decay} and the rapid decay of $\phi$, we estimate for any large $m$,
	\[
	\begin{aligned}
		\|\mathbf{1}_{M_r} g_r\|_{L^{p/2}(\mathbb{R}^N)}^{p/2}
		&\le C_0^{p/2} \int_{|x|\ge r} \Big(\int_{|y|\le r/2} |\phi(x-y)|\,\mathrm{d}y\Big)^{p/2}\,\mathrm{d}x\\
		&\le C \int_{|x|\ge r} \Big(\int_{|y|\le r/2} |x-y|^{-m}\,\mathrm{d}y\Big)^{p/2}\,\mathrm{d}x\\
		&\le C\,|B_{r/2}|^{p/2}\int_{|x|\ge r}\big(|x|-r/2\big)^{-mp/2}\,\mathrm{d}x\\
		&= C\, r^{\frac{(N-m)p}{2}+N}\int_{|z|\ge 1}\big(|z|-1/2\big)^{-mp/2}\,\mathrm{d}z
		\;\le\; C\, r^{-\,\lambda_p},
	\end{aligned}
	\]
	provided $m$ is chosen so that $\tfrac{(m-N)p}{2}-N\ge \lambda_p$. Moreover, by~\cite[Prop.~3.3]{2015-Evequoz&Weth-AIM} we also have
	\[
	\big\|\big[\mathbf{1}_{M_{r/2}} G_1\big]*\tilde u\big\|_{L^p(M_{R+r})}
	\le \big\|\big[\mathbf{1}_{M_{r/2}} G_1\big]*\tilde u\big\|_{L^p(\mathbb{R}^N)}
	\le C\, r^{-\lambda_p}\,\|\tilde u\|_{L^{p^\prime}(\mathbb{R}^N)}
	\le C\, r^{-\lambda_p}\,\|u\|_{L^{p^\prime}(\mathbb{R}^N)}.
	\]
	and we conclude that
	\begin{equation}\label{G1}
		\|G_1*u\|_{L^p(M_{R+r})}\le C\, r^{-\lambda_p}\,\|u\|_{L^{p^\prime}(\mathbb{R}^N)}.
	\end{equation}
	Moreover, combining \eqref{fenjie}, \eqref{G2}, and \eqref{G1} yields \eqref{eq:offdiag}.
\end{proof}

\begin{Rem}
	Under assumption  $(A_2)$, Lemma~\ref{lem:offdiag} still holds. We only need to make minor modifications to the previous proof. The properties of $G_1$ are exactly the same as before; the difference is that $G_2$ satisfies \eqref{g21} but not \eqref{g22}. Note that $a_2=0$ implies that the Fourier symbol $\widehat{G}_2(\xi)$ behaves like $|\xi|^{-2}$ as $|\xi|\to 0$, and this low-frequency singularity is precisely what yields a different decay rate of $G_2$ at infinity. From \eqref{2.6} we obtain
	\[
	|G_2(x)|
	\le \frac{1}{\sqrt{\beta^{2}-4\alpha}}
	\Big( |\Phi_{a_1}(x)-(\psi*\Phi_{a_1})(x)| + |\Phi_0(x)| + |(\psi*\Phi_0)(x)| \Big)
	\le C\big(|H(x)|+|x|^{2-N}\big),
	\]
	where
	\[
	\widehat{H}:=(1-\widehat{\psi})\,\widehat{\Phi}_{a_1}.
	\]
	Since the prefactor $1-\widehat{\psi}$ removes the singularity of $\widehat{\Phi}_{a_1}$ and $\partial^{\gamma}\widehat{\Phi}_{a_1}(\xi)$ behaves like $|\xi|^{-2-|\gamma|}$ as $|\xi|\to\infty$, it follows that $\partial^{\gamma}\widehat{H}\in L^{1}(\mathbb{R}^N)$ for all multi-indices $\gamma$ with $|\gamma|\ge N-1$. In particular, when $|x|\ge 1$,
	\[
	|H(x)|\le C\,|x|^{2-N}.
	\]
	Combining this with \eqref{g21} yields
	\[
	|G_2(x)|\le
	\begin{cases}
		C\,\min\{|x|^{4-N},\,|x|^{2-N}\}, & N>4,\\[2mm]
		C\,\min\{1+|\log|x||,\,|x|^{2-N}\}, & N=4,\\[2mm]
		C\,\min\{1,\,|x|^{2-N}\}, & N=3.
	\end{cases}
	\]
	Hence,
	\[
	\| \mathbf{1}_{M_r}\, G_2 \|_{L^{p/2}}^{\,p/2}
	\;\le\; C \int_{M_r} |x|^{-\frac{(N-2)p}{2}} \,\mathrm{d}x
	\;=\; C\,\omega_N \int_{r}^{\infty} s^{\,N-1-\frac{(N-2)p}{2}} \,\mathrm{d}s,
	\]
	and therefore
	\[
	\| \mathbf{1}_{M_r}\, G_2 \|_{L^{p/2}}
	\;\le\; C \left(\omega_N \int_{r}^{\infty} s^{\,N-1-\frac{(N-2)p}{2}} \,\mathrm{d}s\right)^{\!2/p}
	\;\le\; C\, r^{-\frac{(N-2)(p-2)}{p}}
	\;\le\; C\, r^{-\lambda_p}.
	\]
	Since $r\ge 1$, we obtain
	\[
	\|G_2 * u\|_{L^p(M_{R+r})} \;\le\; C\, r^{-\lambda_p}\, \|u\|_{L^{p^\prime}(\mathbb{R}^N)}.
	\]
	
	Finally, combining this with the estimate for $G_1$ in \eqref{G1}, we conclude that Lemma~\ref{lem:offdiag} also holds under $(A_2)$.
\end{Rem}

\begin{Lem}\label{lem:PS-below-cinf}
	Let $\varepsilon>0$ and assume $W_{\infty}>0$ and $c_{\varepsilon}<c_{\infty}$.
	Then $J_{\varepsilon}$ satisfies the Palais--Smale condition below $c_\infty$; that is,
	if $(v_n)_n\subset \mathcal{N}_{\varepsilon}$ with $J_{\varepsilon}(v_n)\to d<c_{\infty}$ and
	$(J_{\varepsilon}|_{\mathcal{N}_{\varepsilon}})'(v_n)\to 0$, then $(v_n)_n$ admits a convergent
	subsequence in $L^{p^\prime}(\mathbb{R}^N)$.
\end{Lem}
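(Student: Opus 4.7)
The plan is to adapt the bubble-decomposition strategy of Cingolani--Lazzo \cite{1997-Cingolani&Lazzo-TMNA,2000-Cingolani&Lazzo-JDE} to the sign-indefinite nonlocal setting, with Lemma~\ref{lem:offdiag} as the key device to control cross-interactions between separating bumps. First, I would pass from the constrained Palais--Smale condition on $\mathcal{N}_\varepsilon$ to an unconstrained one for $J_\varepsilon$ on $L^{p'}(\mathbb{R}^N)$. Because $\mathcal{N}_\varepsilon$ is a $C^1$ natural constraint and every $v\in\mathcal{N}_\varepsilon$ satisfies $\|v\|_{p'}\ge\rho_0>0$ (an immediate consequence of $c_\varepsilon>0$), a standard Lagrange-multiplier computation forces the multipliers $\lambda_n$ associated with $(J_\varepsilon|_{\mathcal N_\varepsilon})'(v_n)\to 0$ to vanish asymptotically, hence $J_\varepsilon'(v_n)\to 0$ in $L^{p}(\mathbb{R}^N)$.

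By Lemma~\ref{lem:PS-basic}, up to a subsequence $v_n\rightharpoonup v$ weakly with $J_\varepsilon'(v)=0$ and $\mathbf{1}_B v_n\to\mathbf{1}_B v$ strongly for every bounded measurable $B$. Set $w_n:=v_n-v$. The Brezis--Lieb lemma gives $\|v_n\|_{p'}^{p'}=\|v\|_{p'}^{p'}+\|w_n\|_{p'}^{p'}+o(1)$, while the symmetry \eqref{2.8} of $K_\varepsilon$, together with $K_\varepsilon v\in L^{p}(\mathbb{R}^N)$ and $w_n\rightharpoonup 0$ in $L^{p'}(\mathbb{R}^N)$, yields
\[
\int_{\mathbb{R}^N} v_n\,K_\varepsilon v_n\,\mathrm{d}x
=\int_{\mathbb{R}^N} v\,K_\varepsilon v\,\mathrm{d}x
+\int_{\mathbb{R}^N} w_n\,K_\varepsilon w_n\,\mathrm{d}x+o(1).
\]
Hence $J_\varepsilon(v_n)=J_\varepsilon(v)+J_\varepsilon(w_n)+o(1)$ and $J_\varepsilon'(w_n)\to 0$ in $L^{p}(\mathbb{R}^N)$. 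Since $J_\varepsilon'(v)=0$ forces $J_\varepsilon(v)=(\tfrac{1}{p'}-\tfrac{1}{2})\|v\|_{p'}^{p'}\ge 0$, the remaining task is to prove $\|w_n\|_{p'}\to 0$.

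Suppose, for contradiction, that $w_n\not\to 0$. The non-vanishing theorem of Ev\'equoz--Weth \cite[Sec.~3]{2015-Evequoz&Weth-AIM}, applied to the approximate dual equation $|w_n|^{p'-2}w_n=W_\varepsilon^{1/p}\mathbf{R}(W_\varepsilon^{1/p}w_n)+o(1)$, produces $R>0$, $\eta>0$ and $(y_n)\subset\mathbb{R}^N$ with $\int_{B_R(y_n)}|w_n|^{p'}\,\mathrm{d}x\ge\eta$; the local compactness in Lemma~\ref{lem:PS-basic} forces $|y_n|\to\infty$. Extracting further, I may assume $W_\varepsilon(y_n)\to W^{*}\in[0,W_\infty]$, $W_\varepsilon(\cdot+y_n)\rightharpoonup W^{**}\le W_\infty$ weakly-$\ast$ in $L^\infty_{\mathrm{loc}}(\mathbb{R}^N)$, and $\tilde w_n(x):=w_n(x+y_n)\rightharpoonup\tilde w\neq 0$; then $\tilde w$ is a nontrivial critical point of a dual functional whose coefficient is pointwise bounded by $W_\infty$. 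A direct scaling computation for constant coefficients gives $c_W=c_1\,W^{-\frac{2}{p-2}}$, and a Nehari comparison (in the spirit of Lemma~\ref{lem:MP}, as in \cite{1997-Cingolani&Lazzo-TMNA}) then produces the lower bound $J(\tilde w)\ge c_\infty$ for the corresponding limiting functional.

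I would then subtract this bubble and iterate the extraction. The decisive point is additivity of the energies across bubbles: because $\mathbf{R}$ is sign-indefinite, the nonlocal term is not split by Brezis--Lieb for free, and this is exactly where Lemma~\ref{lem:offdiag} is indispensable. Indeed, bumps whose supports are separated by distance $r$ interact only by $O(r^{-\lambda_p})$ with $\lambda_p>0$ in the admissible range of $p$ under $(A_1)$ or $(A_2)$, so after truncation all cross terms vanish in the limit. After finitely many extractions (the number $m$ bounded by $(d-J_\varepsilon(v))/c_\infty$) one arrives at
\[
d=J_\varepsilon(v)+\sum_{j=1}^{m} J(\tilde w^{(j)})+o(1)\ge m\,c_\infty\ge c_\infty,
\]
contradicting $d<c_\infty$. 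Therefore $w_n\to 0$ in $L^{p'}(\mathbb{R}^N)$, which is the required strong convergence. The main obstacle throughout is the sign-indefiniteness of the resolvent $\mathbf{R}$: without the quantitative off-diagonal decay of Lemma~\ref{lem:offdiag}, one could not rule out long-range energy leakage between separating bubbles, and the energy balance above would fail.
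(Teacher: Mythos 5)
Your overall scheme (constrained-to-free PS, Lemma~\ref{lem:PS-basic}, Brezis--Lieb splitting, nonvanishing plus far translations, comparison with $c_\infty$, and Lemma~\ref{lem:offdiag} to tame the sign-indefinite nonlocal term) is the same strategy as the proof the paper invokes from \cite{2017-Evequoz-AMPA}, and the preliminary steps (vanishing multipliers, $\|v\|_{p'}\ge\rho_0$ on $\mathcal N_\varepsilon$, the splitting of $\int v_nK_\varepsilon v_n$, the scaling $c_W=c_1W^{-2/(p-2)}$) are fine. The genuine gap is in the central ``bubble identification'' step: you claim that $\tilde w_n:=w_n(\cdot+y_n)\rightharpoonup\tilde w\neq0$ and that $\tilde w$ is a nontrivial critical point of a dual functional with coefficient bounded by $W_\infty$, using only the weak-$*$ convergence $W_\varepsilon(\cdot+y_n)\rightharpoonup W^{**}$. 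Under $(W_2)$ the hypothesis is only $\limsup_{|x|\to\infty}W=W_\infty$, so the translated weights $W(\varepsilon(\cdot+y_n))$ need not converge a.e.\ or locally uniformly along any subsequence (think of a bounded continuous $W$ oscillating ever faster at infinity); then neither the inner product $W_\varepsilon^{1/p}(\cdot+y_n)\,\tilde w_n$ nor the outer multiplication by $W_\varepsilon^{1/p}(\cdot+y_n)$ passes to the limit in the approximate equation --- these are products of merely weakly(-$*$) convergent sequences --- and the weak-$*$ limit of $W_n^{1/p}$ is in any case not $(W^{**})^{1/p}$. Even the nontriviality $\tilde w\neq0$ is not automatic from $\int_{B_R(y_n)}|w_n|^{p'}\ge\eta$ plus weak convergence; it requires a local-compactness argument with the \emph{translated} (varying) weights, which you do not supply. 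The paper's own Remark after the lemma is a warning sign here: the argument ``$w_n$ is a PS sequence for $J_\infty$, apply Lemma~\ref{lem:profile}'' is explicitly reserved for the \emph{stronger} hypothesis $W_\infty=\lim_{|x|\to\infty}W(x)$; your proposal essentially uses that shortcut in a setting where it is not available.

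The proof the paper relies on (Lemma~2.5 of \cite{2017-Evequoz-AMPA}) avoids any limit equation: one truncates the mass of $w_n$ in a large ball around $y_n$, uses that $W_\varepsilon\le W_\infty+\delta$ outside a fixed ball (which is all that $\limsup$ gives), projects the truncated piece onto the Nehari manifold $\mathcal N_\infty$ of the constant-coefficient functional via Lemma~\ref{lem:MP}, and uses the off-diagonal estimate of Lemma~\ref{lem:offdiag} (together with a standard choice of annuli where the $L^{p'}$-mass of $w_n$ is small, since the lemma needs genuinely disjoint supports) to show the truncation and interaction errors are $o(1)$; this yields $c_\infty\le d-J_\varepsilon(v)+o(1)+O(\delta)$ and hence the contradiction with $d<c_\infty$. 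Two further remarks on your write-up: once a single far bubble with energy at least $c_\infty$ is produced, weak lower semicontinuity of the norm already gives the contradiction, so the multi-bubble iteration and the identity $d=J_\varepsilon(v)+\sum_jJ(\tilde w^{(j)})+o(1)$ are unnecessary; and the assertion that ``after truncation all cross terms vanish'' is exactly the point that must be proved with Lemma~\ref{lem:offdiag} plus the annulus selection, not assumed. As written, the argument does not close under $(W_1)$--$(W_2)$; it would close if you either strengthen the hypothesis to $W_\infty=\lim_{|x|\to\infty}W(x)$ or replace the limit-equation step by the energy-comparison with $\mathcal N_\infty$ described above.
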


\begin{proof}
	The proof follows \cite[Lemma~2.5]{2017-Evequoz-AMPA}; therefore, the details are omitted.
\end{proof}

\begin{Rem}
	Under the stronger assumption $W_\infty=\lim\limits_{|x|\to\infty}W(x)$,
	the proof simplifies: after extracting a weakly convergent subsequence with limit $v$ (a critical
	point of $J_\varepsilon$), the sequence $w_n:=v_n-v$ is a \((PS)\)-sequence for $J_\infty$
	at a level strictly below $c_\infty$. Applying the representation lemma (Lemma~\ref{lem:profile}) then yields
	$w_n\to 0$ in $L^{p^\prime}(\mathbb{R}^N)$.
\end{Rem}

\section{Existence and concentration of dual ground states}

In this section and the next, we assume that $W(x)$ satisfies conditions $(W_1)$ and $(W_2)$,  consider the functional
\[
J_0(v)
:= \frac{1}{p^\prime}\int_{\mathbb{R}^N} |v|^{p^\prime}\,\mathrm{d}x
- \frac{1}{2}\int_{\mathbb{R}^N} \big(W_0^{1/p} v\big)\,
\mathbf{R}\big(W_0^{1/p} v\big)\,\mathrm{d}x,
\qquad v\in L^{p^\prime}(\mathbb{R}^N),
\]
and the associated Nehari manifold
\[
\mathcal{N}_0
:= \bigl\{\, v\in L^{p^\prime}(\mathbb{R}^N)\setminus\{0\}:\ J_0'(v)\,v=0 \,\bigr\}.
\]
This is linked to the limit problem
\begin{equation}\label{eq:limit}
	\Delta^2 u - \beta \Delta u + \alpha u
	= W_0\,|u|^{p-2}u,
	\qquad x\in\mathbb{R}^N.
\end{equation}

By Lemma~\ref{lem:MP}\textup{(iii)}, the level $c_0:=\inf\limits_{\mathcal{N}_0}J_0$ is attained and coincides with the least
energy among nontrivial critical points:
\[
c_0 = \inf\bigl\{\, J_0(v):\ v\in L^{p^\prime}(\mathbb{R}^N),\ v\neq0,\ J_0'(v)=0 \,\bigr\}.
\]

Let
\[
M:=\bigl\{\,x\in\mathbb{R}^N:\ W(x)=W_0 \,\bigr\}.
\]
By ($W_1$)--($W_2$), $M\neq\varnothing$ and $M$ is compact. We construct test functions by translating and cutting off a ground state of $J_0$, then projecting onto $\mathcal{N}_\varepsilon$.
Fix $\eta\in C_c^\infty(\mathbb{R}^N)$ with $0\le \eta\le 1$, $\eta\equiv1$ on $B_1(0)$, and
$\eta\equiv0$ on $\mathbb{R}^N\setminus B_2(0)$.
For $y\in M$ and $\varepsilon>0$ set
\begin{equation}\label{eq:phi}
	\varphi_{\varepsilon,y}(x):=\eta(\varepsilon x - y)\, w\!\bigl(x-\varepsilon^{-1}y\bigr),
\end{equation}
where $w\in L^{p^\prime}(\mathbb{R}^N)$ is a fixed least-energy critical point of $J_0$.

\begin{Lem}\label{lem:31}
	There exists $\varepsilon^*>0$ such that, for all $0<\varepsilon\le\varepsilon^*$ and $y\in M$, there
	is a unique $t_{\varepsilon,y}>0$ with $t_{\varepsilon,y}\varphi_{\varepsilon,y}\in \mathcal{N}_\varepsilon$.
	Moreover,
	\[
	\lim_{\varepsilon\to0^+} J_\varepsilon\bigl(t_{\varepsilon,y}\varphi_{\varepsilon,y}\bigr)=c_0,
	\quad\text{uniformly in } y\in M.
	\]
\end{Lem}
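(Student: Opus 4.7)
The plan is to reduce the problem to a fixed profile around $w$ by translation, exploit translation invariance of $\mathbf{R}$ to eliminate the $\varepsilon^{-1}y$ shifts, and pass to the $\varepsilon\to 0^+$ limit by dominated convergence, with uniformity in $y\in M$ obtained from the continuity of $W$ and the compactness of $M$. Concretely, I would first perform the change of variables $x=z+\varepsilon^{-1}y$ and introduce
\begin{equation*}
\tilde\varphi_{\varepsilon,y}(z):=\eta(\varepsilon z)\,w(z),\qquad
\psi_{\varepsilon,y}(z):=W(\varepsilon z+y)^{1/p}\,\eta(\varepsilon z)\,w(z).
\end{equation*}
Because the kernel $G$ of $\mathbf{R}$ depends only on $x-y$, translation invariance gives $\|\varphi_{\varepsilon,y}\|_{p'}^{p'}=\|\tilde\varphi_{\varepsilon,y}\|_{p'}^{p'}$ and
\begin{equation*}
\int_{\R^N}\!\bigl(W_\varepsilon^{1/p}\varphi_{\varepsilon,y}\bigr)\,\mathbf{R}\!\bigl(W_\varepsilon^{1/p}\varphi_{\varepsilon,y}\bigr)\,\D x=\int_{\R^N}\!\psi_{\varepsilon,y}\,\mathbf{R}\,\psi_{\varepsilon,y}\,\D z,
\end{equation*}
so both numerator and denominator in (2.12) are expressed in the fixed coordinate frame of $w$.

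Next I would pass to the limit $\varepsilon\to 0^+$. Since $0\le\eta\le 1$, $\eta(\varepsilon z)\to 1$ pointwise, $w\in L^{p'}(\R^N)$, and since $M$ is compact with $W\equiv W_0$ on $M$ and $W$ continuous, dominated convergence together with uniform continuity of $W$ on a bounded neighborhood of $M$ yields $\tilde\varphi_{\varepsilon,y}\to w$ and $\psi_{\varepsilon,y}\to W_0^{1/p}w$ in $L^{p'}(\R^N)$, uniformly in $y\in M$. The boundedness $\mathbf{R}\colon L^{p'}(\R^N)\to L^p(\R^N)$ from Section~2 combined with H\"older's inequality then implies
\begin{equation*}
\int_{\R^N}\psi_{\varepsilon,y}\,\mathbf{R}\,\psi_{\varepsilon,y}\,\D z\ \longrightarrow\ \int_{\R^N}(W_0^{1/p}w)\,\mathbf{R}(W_0^{1/p}w)\,\D x=\|w\|_{p'}^{p'}>0,
\end{equation*}
uniformly in $y\in M$, where the last equality uses $w\in\mathcal{N}_0$. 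Hence for all sufficiently small $\varepsilon$, uniformly in $y\in M$, one has $\varphi_{\varepsilon,y}\in U_\varepsilon^+$, and formula (2.12) provides a unique $t_{\varepsilon,y}>0$ with $t_{\varepsilon,y}\varphi_{\varepsilon,y}\in\mathcal{N}_\varepsilon$; since both numerator and denominator in (2.12) converge uniformly to $\|w\|_{p'}^{p'}$, I would deduce $t_{\varepsilon,y}\to 1$ uniformly on $M$.

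Finally, using the identity $J_\varepsilon(v)=\bigl(\tfrac{1}{p'}-\tfrac12\bigr)\|v\|_{p'}^{p'}$ valid on $\mathcal{N}_\varepsilon$, I would conclude
\begin{equation*}
J_\varepsilon\bigl(t_{\varepsilon,y}\varphi_{\varepsilon,y}\bigr)=\Bigl(\tfrac{1}{p'}-\tfrac12\Bigr)\,t_{\varepsilon,y}^{p'}\,\|\varphi_{\varepsilon,y}\|_{p'}^{p'}\ \longrightarrow\ \Bigl(\tfrac{1}{p'}-\tfrac12\Bigr)\|w\|_{p'}^{p'}=J_0(w)=c_0,
\end{equation*}
uniformly in $y\in M$. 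The main technical obstacle is controlling the nonlocal quadratic form: because $\mathbf{R}$ has no sign and couples values at arbitrarily separated points, convergence cannot be reduced to pointwise control and must be routed through the $L^{p'}\!\to L^p$ continuity of $\mathbf{R}$ applied to $\psi_{\varepsilon,y}$. Here the compactness of $M$ is essential in order to upgrade the pointwise limit $W(\varepsilon z+y)\to W_0$ to convergence uniform in $y$, from which all of the above limits inherit their uniformity.
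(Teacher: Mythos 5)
Your proposal is correct and follows essentially the same route as the paper: the change of variables $x=z+\varepsilon^{-1}y$, the uniform convergence $W(y+\varepsilon\cdot)^{1/p}\eta(\varepsilon\cdot)w\to W_0^{1/p}w$ in $L^{p'}$ (via continuity of $W$, compactness of $M$, and dominated convergence), the $L^{p'}\to L^p$ boundedness of $\mathbf{R}$ to pass to the limit in the nonlocal quadratic form, and formula \eqref{2.12} to get $t_{\varepsilon,y}\to 1$ and hence $J_\varepsilon(t_{\varepsilon,y}\varphi_{\varepsilon,y})\to c_0$ uniformly on $M$. No gaps beyond the level of detail the paper itself leaves implicit.
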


\begin{proof}
	As $\varepsilon\to0^+$,
	\[
	W(y+\varepsilon\cdot)\,\eta(\varepsilon\cdot)\,w \;\to\; W_0\,w
	\quad\text{in } L^{p^\prime}(\mathbb{R}^N),
	\]
	uniformly in $y\in M$ by continuity of $W$ and compactness of $M$.
	With the change of variables $x=z+\varepsilon^{-1}y$,
	\[
	\begin{aligned}
		&\quad	\int_{\mathbb{R}^N}\!\big(W_\varepsilon^{1/p}\varphi_{\varepsilon,y}\big)\,
		\mathbf{R}\big(W_\varepsilon^{1/p}\varphi_{\varepsilon,y}\big)\,\mathrm{d}x\\
		&=\int_{\mathbb{R}^N}\!W^{1/p}(y+\varepsilon z)\,\eta(\varepsilon z)w(z)\,
		\mathbf{R}\big(W^{1/p}(y+\varepsilon z)\,\eta(\varepsilon z)w\big)(z)\,\mathrm{d}z\\	&\longrightarrow \int_{\mathbb{R}^N}\!\big(W_0^{1/p}w\big)\,\mathbf{R}\big(W_0^{1/p}w\big)\,\mathrm{d}z
		=\Bigl(\frac{1}{p^\prime}-\frac{1}{2}\Bigr)^{-1} c_0 >0,
	\end{aligned}
	\]
	uniformly in $y\in M$. Hence $\varphi_{\varepsilon,y}\in U_\varepsilon^+$ for small $\varepsilon$, and $t_{\varepsilon,y}$
	exists by \eqref{2.12}. Also,
	\[
	\int_{\mathbb{R}^N}|\varphi_{\varepsilon,y}|^{p^\prime}\,\mathrm{d}x
	=\int_{\mathbb{R}^N}|\eta(\varepsilon z)w(z)|^{p^\prime}\,\mathrm{d}z
	\longrightarrow \int_{\mathbb{R}^N}|w|^{p^\prime}\,\mathrm{d}z
	=\Bigl(\frac{1}{p^\prime}-\frac{1}{2}\Bigr)^{-1}c_0,
	\]
	so $t_{\varepsilon,y}\to1$ uniformly in $y$, and
	$J_\varepsilon(t_{\varepsilon,y}\varphi_{\varepsilon,y})\to c_0$.
\end{proof}

\begin{Lem}\label{lem:32}
	For all $\varepsilon>0$, one has $c_\varepsilon\ge c_0$. Moreover,
	\[
	\lim_{\varepsilon\to0^+} c_\varepsilon = c_0.
	\]
\end{Lem}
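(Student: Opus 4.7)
The plan is to establish the two claims separately: first the sharp lower bound $c_\varepsilon \ge c_0$ valid for every fixed $\varepsilon > 0$, and then the asymptotic upper bound $\limsup_{\varepsilon \to 0^+} c_\varepsilon \le c_0$ by testing with the family $\{t_{\varepsilon,y}\varphi_{\varepsilon,y}\}$ constructed in Lemma~\ref{lem:31}. Together these give the desired convergence.

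For the lower bound, given any $v \in \mathcal{N}_\varepsilon$, I would introduce the auxiliary function
\[
\tilde v := \Big(\frac{W_\varepsilon}{W_0}\Big)^{1/p} v,
\]
which is well defined since $W_0 > 0$ (a consequence of \eqref{1.5}) and satisfies $|\tilde v| \le |v|$ pointwise because $W_\varepsilon \le W_0$ everywhere. The key algebraic identity $W_0^{1/p}\tilde v = W_\varepsilon^{1/p}v$ makes the nonlocal quadratic form of $J_0$ at $\tilde v$ coincide exactly with that of $J_\varepsilon$ at $v$, thereby bypassing the sign-indefiniteness of $\mathbf{R}$. Setting $A := \|v\|_{p'}^{p'}$ and $B := \|\tilde v\|_{p'}^{p'}\le A$, the Nehari condition $v \in \mathcal{N}_\varepsilon$ reads $\int_{\mathbb{R}^N}(W_0^{1/p}\tilde v)\,\mathbf R(W_0^{1/p}\tilde v)\,\mathrm dx = A > 0$, so $\tilde v \in U_0^+$ and \eqref{2.12} produces a unique $t > 0$ with $t\tilde v \in \mathcal N_0$, determined by $t^{p'-2} = A/B \ge 1$. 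Since $p' < 2$, this forces $t \le 1$, and consequently
\[
c_0 \;\le\; J_0(t\tilde v) \;=\; \Big(\tfrac{1}{p'} - \tfrac{1}{2}\Big)\, t^{p'} B \;=\; \Big(\tfrac{1}{p'} - \tfrac{1}{2}\Big)\, t^2 A \;\le\; \Big(\tfrac{1}{p'} - \tfrac{1}{2}\Big)\, A \;=\; J_\varepsilon(v).
\]
Taking the infimum over $v \in \mathcal N_\varepsilon$ yields $c_\varepsilon \ge c_0$.

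For the upper bound, fix any $y \in M$ and invoke Lemma~\ref{lem:31}, which provides $t_{\varepsilon,y}\varphi_{\varepsilon,y} \in \mathcal N_\varepsilon$ with $J_\varepsilon(t_{\varepsilon,y}\varphi_{\varepsilon,y}) \to c_0$ as $\varepsilon \to 0^+$. Then $c_\varepsilon \le J_\varepsilon(t_{\varepsilon,y}\varphi_{\varepsilon,y})$ gives $\limsup_{\varepsilon \to 0^+} c_\varepsilon \le c_0$, which combined with the pointwise bound closes the proof.

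The substantive point is the first step: because the bilinear form $\int u\,\mathbf{R}v\,\mathrm dx$ is sign-indefinite, one cannot dominate $J_\varepsilon$ by $J_0$ pointwise in $v$. The weighted substitution $\tilde v = (W_\varepsilon/W_0)^{1/p}v$ is engineered precisely so that it simultaneously shrinks the $L^{p'}$-mass via $W_\varepsilon \le W_0$ and preserves the sign-indefinite quadratic form \emph{exactly}; the subquadratic exponent $p' < 2$ then closes the estimate through Nehari projection.
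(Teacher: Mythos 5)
Your proposal is correct and follows essentially the same argument as the paper: the weighted substitution $\tilde v=(W_\varepsilon/W_0)^{1/p}v$ preserving the quadratic form while shrinking the $L^{p'}$-norm, the Nehari projection with $t\le 1$ (using $p'<2$), and the upper bound via the test functions of Lemma~\ref{lem:31}. The only cosmetic differences are your explicit check that $\tilde v\in U_0^+$ and writing the projection exponent as $p'-2$ instead of $2-p'$, neither of which changes the argument.
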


\begin{proof}
	Let $v_\varepsilon\in\mathcal{N}_\varepsilon$ be arbitrary and set
	\[
	v_0:=\Bigl(\frac{W_\varepsilon}{W_0}\Bigr)^{\!1/p}\,v_\varepsilon.
	\]
	Then $W_0^{1/p}v_0=W_\varepsilon^{1/p}v_\varepsilon$, hence
	\[
	\int_{\mathbb{R}^N}\big(W_0^{1/p}v_0\big)\,\mathbf{R}\big(W_0^{1/p}v_0\big)\,\mathrm{d}x
	=\int_{\mathbb{R}^N}\big(W_\varepsilon^{1/p}v_\varepsilon\big)\,\mathbf{R}\big(W_\varepsilon^{1/p}v_\varepsilon\big)\,\mathrm{d}x
	=\int_{\mathbb{R}^N}|v_\varepsilon|^{p^\prime}\,\mathrm{d}x,
	\]
	since $v_\varepsilon\in\mathcal{N}_\varepsilon$.
	Moreover,
	\[
	\int_{\mathbb{R}^N}|v_0|^{p^\prime}\,\mathrm{d}x
	=\int_{\mathbb{R}^N}\Bigl(\frac{W_\varepsilon}{W_0}\Bigr)^{\!\frac{p^\prime}{p}}|v_\varepsilon|^{p^\prime}\,\mathrm{d}x
	\le \int_{\mathbb{R}^N}|v_\varepsilon|^{p^\prime}\,\mathrm{d}x,
	\]
	because $W_\varepsilon\le W_0$.
	Let $t_\varepsilon>0$ be such that $t_\varepsilon v_0\in\mathcal{N}_0$. Then
	\[
	t_\varepsilon^{\,2-p^\prime}=\frac{\displaystyle\int|v_0|^{p^\prime}\,\mathrm{d}x}
	{\displaystyle\int (W_0^{1/p}v_0)\,\mathbf{R}(W_0^{1/p}v_0)\,\mathrm{d}x}
	\le 1,
	\]
	so
	\[
	c_0 \le J_0(t_\varepsilon v_0)
	=\Bigl(\frac{1}{p^\prime}-\frac{1}{2}\Bigr)t_\varepsilon^{p^\prime}\!\int|v_0|^{p^\prime}\,\mathrm{d}x
	\le \Bigl(\frac{1}{p^\prime}-\frac{1}{2}\Bigr)\!\int|v_\varepsilon|^{p^\prime}\,\mathrm{d}x
	= J_\varepsilon(v_\varepsilon).
	\]
	Taking $\inf$ over $v_\varepsilon\in\mathcal{N}_\varepsilon$ gives $c_0\le c_\varepsilon$.
	On the other hand, by Lemma~\ref{lem:31}, for any $y\in M$,
	\[
	c_\varepsilon \le J_\varepsilon\bigl(t_{\varepsilon,y}\varphi_{\varepsilon,y}\bigr)\to c_0
	\quad (\varepsilon\to0^+),
	\]
	hence $\lim\limits_{\varepsilon\to0^+} c_\varepsilon=c_0$.
\end{proof}

\begin{Prop}\label{prop:33} Assume $(A_1)$ or $(A_2)$.
	There exists $\varepsilon_0>0$ such that for all $0<\varepsilon<\varepsilon_0$, the least energy level $c_\varepsilon$ is attained.
\end{Prop}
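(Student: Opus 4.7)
The plan is to combine three ingredients already at our disposal: the convergence $c_\varepsilon\to c_0$ from Lemma~\ref{lem:32}, a strict energy gap $c_0<c_\infty$ forced by assumption $(W_2)$, and the Palais--Smale condition below $c_\infty$ from Lemma~\ref{lem:PS-below-cinf}. Together they will ensure that for all sufficiently small $\varepsilon$ one has $c_\varepsilon<c_\infty$, so that any minimizing Palais--Smale sequence on $\mathcal{N}_\varepsilon$ must converge to an achiever of $c_\varepsilon$.

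First I would establish the strict energy gap $c_0<c_\infty$. If $W_\infty=0$, then as recalled in Section~2 the functional $J_\varepsilon$ enjoys the Palais--Smale condition at every positive level, so the argument below works with $c_\infty$ effectively replaced by $+\infty$. If $W_\infty>0$, I would run a scaling argument dual to the one in Lemma~\ref{lem:32}. Given any $v\in\mathcal{N}_\infty$, set $\tilde v:=(W_\infty/W_0)^{1/p}\,v$. Because $W_0^{1/p}\tilde v=W_\infty^{1/p}v$, the nonlocal quadratic form is preserved, while the $L^{p^\prime}$-norm strictly decreases by a factor $(W_\infty/W_0)^{p^\prime/p}<1$. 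Projecting $\tilde v$ onto $\mathcal{N}_0$ through the unique $t>0$ satisfying $t^{2-p^\prime}=(W_\infty/W_0)^{p^\prime/p}<1$ (which gives $t<1$ since $p>2$), and using that $J_\infty(v)=\bigl(\tfrac{1}{p^\prime}-\tfrac{1}{2}\bigr)\|v\|_{p^\prime}^{p^\prime}$ on $\mathcal{N}_\infty$, yields
\[
c_0\ \le\ J_0(t\tilde v)\ =\ t^{p^\prime}\,(W_\infty/W_0)^{p^\prime/p}\,J_\infty(v).
\]
Taking the infimum over $v\in\mathcal{N}_\infty$ then gives $c_0\le t^{p^\prime}(W_\infty/W_0)^{p^\prime/p}\,c_\infty<c_\infty$, since both factors in front of $c_\infty$ are strictly less than one.

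Second, by Lemma~\ref{lem:32} we have $c_\varepsilon\to c_0$ as $\varepsilon\to 0^+$, so there is $\varepsilon_0>0$ such that $c_\varepsilon<c_\infty$ for all $0<\varepsilon<\varepsilon_0$. Applying Ekeland's variational principle on the $C^1$ submanifold $\mathcal{N}_\varepsilon$ produces a sequence $(v_n)\subset\mathcal{N}_\varepsilon$ with $J_\varepsilon(v_n)\to c_\varepsilon$ and $(J_\varepsilon|_{\mathcal{N}_\varepsilon})'(v_n)\to 0$. Lemma~\ref{lem:PS-below-cinf} then forces convergence of a subsequence to some $v_\varepsilon\in L^{p^\prime}(\mathbb{R}^N)$; since $0$ is isolated among zeros of $v\mapsto J_\varepsilon'(v)v$ (as noted right after the definition of $c_\varepsilon$), we have $v_\varepsilon\neq 0$, and by continuity of $J_\varepsilon$ and $J_\varepsilon'$ we get $v_\varepsilon\in\mathcal{N}_\varepsilon$ with $J_\varepsilon(v_\varepsilon)=c_\varepsilon$, so $c_\varepsilon$ is attained.

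The main obstacle is the first step: the strict gap $c_0<c_\infty$ is precisely what opens the ``window'' below $c_\infty$ inside which Lemma~\ref{lem:PS-below-cinf} is available, and it is the place where the hypothesis $W_\infty<W_0$ is genuinely exploited. Once the gap is secured, the remainder is a routine Ekeland-plus-compactness argument, parallel to \cite[Prop.~3.3]{2017-Evequoz-AMPA}.
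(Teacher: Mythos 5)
Your proposal is correct and follows essentially the same route as the paper: Lemma~\ref{lem:32} together with $(W_2)$ gives $c_\varepsilon<c_\infty$ for small $\varepsilon$, Ekeland's variational principle on the $C^1$ manifold $\mathcal{N}_\varepsilon$ produces a Palais--Smale sequence at level $c_\varepsilon$, and Lemma~\ref{lem:PS-below-cinf} yields a convergent subsequence whose (nontrivial) limit attains $c_\varepsilon$. Your explicit scaling computation establishing the strict gap $c_0<c_\infty$ (and your separate treatment of the case $W_\infty=0$) merely spells out correctly what the paper leaves implicit in the phrase ``By Lemma~\ref{lem:32} and $(W_2)$''.
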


\begin{proof}
	By Lemma~\ref{lem:32} and ($W_2$), there is $\varepsilon_0>0$ with $c_\varepsilon<c_\infty$ for all $0<\varepsilon<\varepsilon_0$.
	For such $\varepsilon$, since $\mathcal{N}_\varepsilon$ is a $C^1$ submanifold of $L^{p^\prime}(\mathbb{R}^N)$, Ekeland’s
	variational principle (see, e.g., \cite[Thm.~3.1]{1974-Ekeland-JMAA}) yields a \((PS)\)-sequence for $J_\varepsilon$ on
	$\mathcal{N}_\varepsilon$ at level $c_\varepsilon$. The conclusion follows from Lemma~\ref{lem:PS-below-cinf}.
\end{proof}

\begin{Prop}\label{prop:34}
	Let $(\varepsilon_n)_n\subset(0,\infty)$ with $\varepsilon_n\to 0$.
	For each $n$, take $v_n\in\mathcal{N}_{\varepsilon_n}$ such that $J_{\varepsilon_n}(v_n)\to c_0$.
	Then there exist $x_0\in M:=\{x:W(x)=W_0\}$, a critical point $w_0$ of $J_0$ with
	$J_0(w_0)=c_0$, and a sequence $(y_n)_n\subset\mathbb{R}^N$ such that, up to a subsequence,
	\[
	x_n:=\varepsilon_n y_n\to x_0,\qquad
	v_n(\cdot+y_n)\ \to\ w_0 \quad\text{in } L^{p^\prime}(\mathbb{R}^N).
	\]
\end{Prop}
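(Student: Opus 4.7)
The strategy follows the concentration-compactness scheme of Cingolani--Lazzo \cite{1997-Cingolani&Lazzo-TMNA} transplanted to the dual variational framework. Since $v_n\in\mathcal{N}_{\varepsilon_n}$, the Nehari identity gives $\|v_n\|_{L^{p'}(\mathbb{R}^N)}^{p'}=J_{\varepsilon_n}(v_n)/(1/p'-1/2)\to c_0/(1/p'-1/2)$, so $(v_n)$ is bounded and bounded away from zero in $L^{p'}(\mathbb{R}^N)$. Because $\mathcal{N}_{\varepsilon_n}$ is a $C^1$-submanifold and a natural constraint (the associated Lagrange multiplier vanishes since $J_{\varepsilon_n}''(v_n)(v_n,v_n)=(p'-2)\|v_n\|_{p'}^{p'}\neq 0$ on $\mathcal{N}_{\varepsilon_n}$), Ekeland's variational principle allows me to replace $v_n$ by a nearby sequence, still denoted $v_n$, which is a $(PS)_{c_0}$-sequence for $J_{\varepsilon_n}$ on the whole space $L^{p'}(\mathbb{R}^N)$.

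Next, the non-vanishing lemma of Ev\'equoz--Weth \cite{2015-Evequoz&Weth-AIM} provides $(y_n)\subset\mathbb{R}^N$ and constants $R,\delta>0$ such that $\int_{B_R(y_n)}|v_n|^{p'}\,\mathrm{d}x\ge\delta$; this is applicable since $\int(W_{\varepsilon_n}^{1/p}v_n)\,\mathbf{R}(W_{\varepsilon_n}^{1/p}v_n)\,\mathrm{d}x=\|v_n\|_{p'}^{p'}$ stays bounded below on $\mathcal{N}_{\varepsilon_n}$. Set $\tilde v_n(x):=v_n(x+y_n)$ and $x_n:=\varepsilon_n y_n$; after passing to a subsequence, $\tilde v_n\rightharpoonup w$ weakly in $L^{p'}(\mathbb{R}^N)$. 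The local compactness of the Birman--Schwinger operator (Lemma~\ref{lem:PS-basic}) applied to the translated PS sequence yields $\mathbf{1}_{B_R}\tilde v_n\to\mathbf{1}_{B_R}w$ strongly in $L^{p'}$, so $w\not\equiv 0$. By continuity of $W$, the translated weight $\widetilde W_n(z):=W(\varepsilon_n z+x_n)$ differs from the constant $W(x_n)$ by $o(1)$ in $L^\infty_{\mathrm{loc}}(\mathbb{R}^N)$.

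Assume, toward a contradiction, that $|x_n|\to\infty$ along a subsequence. Then $\limsup_{n}W(x_n)\le W_\infty$, and (up to a further subsequence) $\widetilde W_n\to\bar W\le W_\infty<W_0$ locally uniformly. Passing to the limit in the translated Euler--Lagrange equation
\[
|\tilde v_n|^{p'-2}\tilde v_n=\widetilde W_n^{1/p}\,\mathbf{R}\bigl(\widetilde W_n^{1/p}\tilde v_n\bigr)+o(1)\quad\text{in }L^p(\mathbb{R}^N),
\]
one finds that $w$ is a nontrivial critical point of the limit functional $J_{\bar W}$, so $J_{\bar W}(w)\ge c_{\bar W}$. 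On the other hand, since $w\in\mathcal{N}_{\bar W}$, weak lower semicontinuity of the norm gives $J_{\bar W}(w)=(1/p'-1/2)\|w\|_{p'}^{p'}\le\liminf_n(1/p'-1/2)\|\tilde v_n\|_{p'}^{p'}=c_0$. A scaling $v\mapsto W^{2/(p(p'-2))}v$ exhibits $c_W$ as $c_1$ times a negative power of $W$, hence $W\mapsto c_W$ is strictly decreasing on $(0,\infty)$; therefore $c_{\bar W}\ge c_{W_\infty}>c_{W_0}=c_0$, contradicting $c_{\bar W}\le c_0$. Thus $(x_n)$ is bounded, and along a subsequence $x_n\to x_0$. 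Re-running the same limiting procedure yields a nontrivial critical point $w_0$ of $J_{W(x_0)}$ with $J_{W(x_0)}(w_0)\le c_0\le c_{W(x_0)}$, and the strict monotonicity forces $W(x_0)=W_0$, $J_0(w_0)=c_0$, so $x_0\in M$ and $w_0$ is a ground state of the limit problem. Finally, from $\|w_0\|_{p'}^{p'}=c_0/(1/p'-1/2)=\lim_n\|\tilde v_n\|_{p'}^{p'}$, the weak convergence combined with the uniform convexity of $L^{p'}(\mathbb{R}^N)$ (since $1<p'<2$) upgrades $\tilde v_n\rightharpoonup w_0$ to strong convergence in $L^{p'}(\mathbb{R}^N)$.

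The main obstacle is the passage to the limit in the non-autonomous Euler--Lagrange equation when identifying $w$ as a critical point of $J_{\bar W}$: one must combine the local compactness of $\mathbf{1}_B\mathbf{R}$ with the uniform control $\widetilde W_n\to \bar W$ in $L^\infty_{\mathrm{loc}}$, while preventing mass from escaping to infinity. The off-diagonal decay estimate of Lemma~\ref{lem:offdiag} is the quantitative tool that rules out such tail interactions, allowing one to replace $\mathbf{R}(\widetilde W_n^{1/p}\tilde v_n)$ by $\mathbf{R}(\mathbf{1}_{B_{R'}}\widetilde W_n^{1/p}\tilde v_n)$ up to an error that vanishes as $R'\to\infty$, uniformly in $n$, and thereby to close both the limiting argument and the energy inequality needed to invoke the strict monotonicity of $W\mapsto c_W$.
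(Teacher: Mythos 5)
Your overall scheme (translate, extract a weak limit, identify a limit problem, compare Nehari levels, finish with Radon--Riesz) is a legitimate alternative to the paper's argument, which instead transfers $v_n$ to the autonomous manifold $\mathcal{N}_0$ via $v_{0,n}:=(W_{\varepsilon_n}/W_0)^{1/p}v_n$, applies Ekeland plus the profile decomposition (Lemma~\ref{lem:profile}) to get \emph{strong} convergence first, and only then rules out escape to infinity by a Fatou argument. However, as written your proof has a genuine gap precisely at the escape-to-infinity step. You assert that if $|x_n|=|\varepsilon_n y_n|\to\infty$, then (up to a subsequence) $\widetilde W_n(z)=W(\varepsilon_n z+x_n)\to \bar W\le W_\infty$ \emph{locally uniformly}, with $\bar W$ a constant. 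Under $(W_1)$--$(W_2)$ the weight $W$ is only continuous and bounded and $W_\infty$ is a $\limsup$; there is no uniform continuity and no limit at infinity, so the translated weights need not converge in any pointwise or locally uniform sense (take $W$ oscillating at finer and finer scales near infinity, e.g.\ behaving like $\tfrac{W_\infty}{2}(1+\sin|x|^2)$ for large $|x|$: the oscillation scale near $x_n$ is comparable to $\varepsilon_n$, so no subsequence of $\widetilde W_n$ is equicontinuous). All one gets is $\limsup_n\sup_{|z|\le R}\widetilde W_n(z)\le W_\infty$. This unjustified convergence is not cosmetic: it is what you use (a) to apply the Lemma~\ref{lem:PS-basic}-type local compactness to the \emph{translated, variable-weight} sequence and conclude $\mathbf{1}_{B_R}\tilde v_n\to\mathbf{1}_{B_R}w$ strongly, hence $w\neq0$ (with only weak-$*$ convergent weights the right-hand side $\widetilde W_n^{1/p}\mathbf{R}(\widetilde W_n^{1/p}\tilde v_n)$ converges only weakly in $L^p_{\rm loc}$, which is not enough to upgrade $\tilde v_n$ to local strong convergence), and (b) to pass to the limit in the Euler--Lagrange equation and identify $w$ as a nontrivial critical point of the \emph{constant-coefficient} functional $J_{\bar W}$, which is exactly what feeds the monotonicity comparison $c_{\bar W}\ge c_{W_\infty}>c_0$.

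The gap is patchable, but not by the sentence you wrote: either replace the limit-equation argument in the escape case by a domination/Fatou comparison against the constant $W_0$ (this is what the paper does, using the strong convergence of $v_{0,n}(\cdot+y_n)$ supplied by Lemma~\ref{lem:profile} before any dichotomy), or work with a weak-$*$ limit weight $\omega\le W_\infty^{1/p}$ and redo the Nehari-level comparison by the substitution $z:=\omega w/W_\infty^{1/p}$ (the same trick as in Lemma~\ref{lem:32}), after first establishing the local strong convergence of $\tilde v_n$ by an argument that does not presuppose convergence of the weights. In the bounded case $x_n\to x_0$ your reasoning is fine, since continuity of $W$ at $x_0$ does give locally uniform convergence of $\widetilde W_n$ to the constant $W(x_0)$; the scaling identity $c_W=W^{2p'/(p(p'-2))}c_1$, the natural-constraint/Ekeland step, and the Radon--Riesz conclusion are all correct. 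Also note that Lemma~\ref{lem:offdiag} is not the relevant tool here (the paper uses it only for the Palais--Smale condition below $c_\infty$); what your route actually needs is the weight-convergence input discussed above.
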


\begin{proof}
	For each $n$, set
	\[
	v_{0,n}:=\Bigl(\frac{W_{\varepsilon_n}}{W_0}\Bigr)^{\!1/p}\,v_n.
	\]
	Then $0\le |v_{0,n}|\le |v_n|$ a.e. and
	\[
	\int_{\mathbb{R}^N}\big(W_0^{1/p} v_{0,n}\big)\,
	\mathbf{R}\big(W_0^{1/p} v_{0,n}\big)\,\mathrm{d}x
	=\int_{\mathbb{R}^N}\big(W_{\varepsilon_n}^{1/p} v_n\big)\,
	\mathbf{R}\big(W_{\varepsilon_n}^{1/p} v_n\big)\,\mathrm{d}x>0.
	\]
	Let $t_{0,n}>0$ be such that $t_{0,n}v_{0,n}\in\mathcal{N}_0$. Then
	\[
	t_{0,n}^{\,2-p^\prime}
	=\frac{\displaystyle\int|v_{0,n}|^{p^\prime}\,\mathrm{d}x}
	{\displaystyle\int (W_0^{1/p}v_{0,n})\,\mathbf{R}(W_0^{1/p}v_{0,n})\,\mathrm{d}x}
	=\frac{\displaystyle\int \Bigl(\frac{W_{\varepsilon_n}}{W_0}\Bigr)^{\!\frac{p^\prime}{p}} |v_n|^{p^\prime}\,\mathrm{d}x}
	{\displaystyle\int|v_n|^{p^\prime}\,\mathrm{d}x}\ \le\ 1,
	\]
	because $W_{\varepsilon_n}\le W_0$.
	Hence $t_{0,n}v_{0,n}\in\mathcal{N}_0$ and
%	\[
%	c_0\le J_0(t_{0,n}v_{0,n})
%	=\Bigl(\tfrac{1}{p^\prime}-\tfrac{1}{2}\Bigr)t_{0,n}^{p^\prime}\!\int|v_{0,n}|^{p^\prime}\,\mathrm{d}x
%	\le \Bigl(\tfrac{1}{p^\prime}-\tfrac{1}{2}\Bigr)\!\int|v_n|^{p^\prime}\,\mathrm{d}x
%	=J_{\varepsilon_n}(v_n)\to c_0.
%	\]
%	
	\[
	\begin{aligned}
		c_0 & \le J_0\!\left(t_{0,n}v_{0,n}\right)
		=\left(\frac{1}{p'}-\frac{1}{2}\right)t_{0,n}^{2}
		\int_{\mathbb{R}^N} W_0^{\frac{1}{p}} v_{0,n}\,
		\mathbf{R}\!\left(W_0^{\frac{1}{p}} v_{0,n}\right)\,\mathrm{d}x \\
		&=\left(\frac{1}{p'}-\frac{1}{2}\right)t_{0,n}^{2}
		\int_{\mathbb{R}^N} W_{\varepsilon_n}^{\frac{1}{p}} v_n\,
		\mathbf{R}\!\left(W_{\varepsilon_n}^{\frac{1}{p}} v_n\right)\,\mathrm{d}x \\
		&=t_{0,n}^{2}J_{\varepsilon_n}(v_n)
		\le J_{\varepsilon_n}(v_n)\to c_0,\qquad \text{as }n\to\infty.
	\end{aligned}
	\]
	
	Thus $t_{0,n}\to 1$, and $(t_{0,n}v_{0,n})_n\subset\mathcal{N}_0$ is a minimizing sequence for $J_0$ on $\mathcal{N}_0$.
	By Ekeland’s variational principle and the natural constraint property of $\mathcal{N}_0$,
	there exists a \((PS)_{c_0}\)-sequence $(w_n)_n\subset L^{p^\prime}(\mathbb{R}^N)$ for $J_0$ such that
	\[
	\|t_{0,n}v_{0,n}-w_n\|_{L^{p^\prime}(\mathbb{R}^N)+}\to 0.
	\]
	Applying Lemma~\ref{lem:profile}, there are a critical point $w_0$ of $J_0$ at level $c_0$ and
	$(y_n)_n\subset\mathbb{R}^N$ such that, up to a subsequence,
	\[
	\|\,w_n(\cdot+y_n)-w_0\,\|_{L^{p^\prime}(\mathbb{R}^N)}\to 0.
	\]
	Since $t_{0,n}\to 1$, it follows that
	\[
	v_{0,n}(\cdot+y_n)\to w_0 \quad \text{in } L^{p^\prime}(\mathbb{R}^N).
	\]
	
	We claim $(\varepsilon_n y_n)_n$ is bounded. Suppose not: assume $|\varepsilon_n y_n|\to\infty$
	along a subsequence. Using
	\[
	|v_n(x)|^{p^\prime}=\Bigl(\frac{W_0}{W(\varepsilon_nx+\varepsilon_n y_n)}\Bigr)^{p^\prime-1}
	|v_{0,n}(x+y_n)|^{p^\prime}
	\]
	and Fatou’s lemma,
	\[
	\begin{aligned}
		c_0
		&=\lim_{n\to\infty} J_{\varepsilon_n}(v_n)
		=\lim_{n\to\infty}\Bigl(\tfrac1{p^\prime}-\tfrac12\Bigr)\int |v_n|^{p^\prime}\,\mathrm{d}x \\
		&=\liminf_{n\to\infty}\Bigl(\tfrac1{p^\prime}-\tfrac12\Bigr)
		\int \Bigl(\tfrac{W_0}{W(\varepsilon_nx+\varepsilon_n y_n)}\Bigr)^{p^\prime-1}
		|v_{0,n}(x+y_n)|^{p^\prime}\,\mathrm{d}x \\
		&\ge \Bigl(\tfrac{W_0}{W_\infty}\Bigr)^{p^\prime-1}
		\Bigl(\tfrac1{p^\prime}-\tfrac12\Bigr)\int |w_0|^{p^\prime}\,\mathrm{d}x
		=\Bigl(\tfrac{W_0}{W_\infty}\Bigr)^{p^\prime-1} c_0,
	\end{aligned}
	\]
	which contradicts ($W_2$). Hence $(\varepsilon_n y_n)$ is bounded; passing to a subsequence,
	$\varepsilon_n y_n\to x_0\in\mathbb{R}^N$. Since $W(\varepsilon_nx+\varepsilon_n y_n)\to W(x_0)$
	locally uniformly, dominated convergence gives
	\[
	c_0=\Bigl(\tfrac{W_0}{W(x_0)}\Bigr)^{p^\prime-1} c_0,
	\]
	so $W(x_0)=W_0$, i.e.\ $x_0\in M$. Finally,
	\[
	v_n(\cdot+y_n)
	=\Bigl(\tfrac{W_0}{W(\varepsilon_n\cdot+\varepsilon_n y_n)}\Bigr)^{\!1/p} v_{0,n}(\cdot+y_n)
	\to \Bigl(\tfrac{W_0}{W(x_0)}\Bigr)^{\!1/p} w_0=w_0
	\]
	in $L^{p^\prime}(\mathbb{R}^N)$. This completes the proof.
\end{proof}

\begin{Thm}\label{thm:35}
	Assume $(A_1)$ or $(A_2)$.
	Let $k_0:=\varepsilon_0^{-1}>0$, where $\varepsilon_0>0$ is given by Proposition~\ref{prop:33}.
	Take any sequence $(k_n)_n\subset(k_0,\infty)$ with $k_n\to\infty$, and for each $n$ a dual ground state
	solution $u_n$ of
	\[
	\Delta^2 u - \beta k_n^2\,\Delta u + \alpha k_n^4\,u
	= W(x)\,|u|^{p-2}u \quad \text{in }\mathbb{R}^N.
	\]
	Then there exist $x_0\in M$, a dual ground state solution $u_0$ of the
	limit equation \eqref{eq:limit}, and a sequence $(x_n)_n\subset\mathbb{R}^N$ such that, up to a subsequence
	\[
	x_n\to x_0 \quad\text{and}\quad
	k_n^{-\frac{4}{p-2}}\, u_n\!\Big(\tfrac{\cdot}{k_n}+x_n\Big)\ \to\ u_0
	\quad \text{in } L^p(\mathbb{R}^N).
	\]
\end{Thm}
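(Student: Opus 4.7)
The plan is to lift the $L^{p'}$--level concentration of Proposition~\ref{prop:34} to an $L^{p}$--level concentration of the primal solutions $u_n$ via the resolvent representation $u_{\varepsilon}=\mathbf{R}(W_{\varepsilon}^{1/p}v)$. Set $\varepsilon_n:=k_n^{-1}\to 0^+$. Because each $u_n$ is, by definition, a dual ground state of \eqref{1.1} at parameter $k_n$, there exists $v_n\in \mathcal{N}_{\varepsilon_n}$ with $J_{\varepsilon_n}(v_n)=c_{\varepsilon_n}$ such that the scaled function $u_{n,\varepsilon_n}(x):=\varepsilon_n^{4/(p-2)}u_n(\varepsilon_n x)$ solves \eqref{2.1} and satisfies $u_{n,\varepsilon_n}=\mathbf{R}\!\bigl(W_{\varepsilon_n}^{1/p} v_n\bigr)$. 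By Lemma~\ref{lem:32} we have $c_{\varepsilon_n}\to c_0$, so $(v_n)_n$ is a minimizing sequence in exactly the sense required by Proposition~\ref{prop:34}.

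Applying Proposition~\ref{prop:34}, I obtain $x_0\in M$, a critical point $w_0$ of $J_0$ at level $c_0$, and $y_n\in\mathbb{R}^N$ such that, along a subsequence, $x_n:=\varepsilon_n y_n\to x_0$ and $v_n(\cdot+y_n)\to w_0$ in $L^{p'}(\mathbb{R}^N)$. Define the candidate limit profile
\[
u_0:=\mathbf{R}\!\bigl(W_0^{1/p}w_0\bigr),
\]
which, since $w_0$ is a least--energy critical point of $J_0$, is a dual ground state of \eqref{eq:limit}. The task is then to identify $k_n^{-4/(p-2)}u_n(\cdot/k_n+x_n)$ with a translate of $u_{n,\varepsilon_n}$ and pass to the limit. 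A direct change of variable yields
\[
k_n^{-\frac{4}{p-2}}u_n\!\Big(\frac{x}{k_n}+x_n\Big)=u_{n,\varepsilon_n}(x+k_n x_n)=u_{n,\varepsilon_n}(x+y_n),
\]
using $y_n=k_n x_n=\varepsilon_n^{-1}x_n$.

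By translation invariance of convolution, and hence of $\mathbf{R}$, one has
\[
u_{n,\varepsilon_n}(x+y_n)=\mathbf{R}\!\bigl(f_n\bigr)(x),\qquad f_n(x):=W(\varepsilon_n x+x_n)^{1/p}\,v_n(x+y_n).
\]
I then show $f_n\to W_0^{1/p}w_0$ in $L^{p'}(\mathbb{R}^N)$ by the standard splitting
\[
f_n-W_0^{1/p}w_0=W(\varepsilon_n\cdot+x_n)^{1/p}\bigl[v_n(\cdot+y_n)-w_0\bigr]+\bigl[W(\varepsilon_n\cdot+x_n)^{1/p}-W_0^{1/p}\bigr]w_0,
\]
controlling the first term by the uniform bound $\|W\|_\infty^{1/p}$ and the $L^{p'}$--convergence from Proposition~\ref{prop:34}, and the second term by dominated convergence using the continuity of $W$ at $x_0$ (so $W(\varepsilon_n x+x_n)\to W_0$ pointwise) together with the domination $|\,\cdot\,|\le 2\|W\|_\infty^{1/p}|w_0|\in L^{p'}(\mathbb{R}^N)$. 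The bounded extension $\mathbf{R}:L^{p'}(\mathbb{R}^N)\to L^{p}(\mathbb{R}^N)$ recalled in \eqref{3.1} then gives
\[
u_{n,\varepsilon_n}(\cdot+y_n)=\mathbf{R}(f_n)\to\mathbf{R}\!\bigl(W_0^{1/p}w_0\bigr)=u_0\quad\text{in }L^{p}(\mathbb{R}^N),
\]
which is the desired conclusion.

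The main obstacle is bookkeeping rather than analysis: one must verify that the two rescalings (the $\varepsilon$--scaling and the $y_n$--translation) combine in exactly the form stated in the theorem, and that the weight factor $W(\varepsilon_n\cdot+x_n)^{1/p}$ can be moved across the resolvent without losing the $L^{p'}\to L^p$ boundedness. Once the product convergence $f_n\to W_0^{1/p}w_0$ in $L^{p'}$ is secured (for which the only nontrivial input is the continuity of $W$ at $x_0\in M$ and the uniform bound $W\le W_0$), the continuous mapping property of $\mathbf{R}$ closes the argument and identifies $u_0$ as a dual ground state of the limit equation.
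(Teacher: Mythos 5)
Your proposal is correct and follows essentially the same route as the paper's proof: express the rescaled, translated $u_n$ as $\mathbf{R}$ applied to the translated weighted dual variable, invoke Lemma~\ref{lem:32} and Proposition~\ref{prop:34} for the $L^{p'}$-convergence $v_n(\cdot+y_n)\to w_0$ with $\varepsilon_n y_n\to x_0\in M$, and conclude by the $L^{p'}\to L^p$ boundedness of $\mathbf{R}$. Your explicit splitting argument for $f_n\to W_0^{1/p}w_0$ in $L^{p'}$ merely fills in a step the paper states without detail, so no substantive difference remains.
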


\begin{proof}
	Let $\varepsilon_n:=k_n^{-1}$. By the dual framework, there exist least-energy critical points
	$v_n\in L^{p^\prime}(\mathbb{R}^N)$ of $J_{\varepsilon_n}$ such that
	\[
	u_n(x)=k_n^{\frac{4}{p-2}}\,
	\mathbf{R}\!\Big(W_{\varepsilon_n}^{1/p} v_n\Big)\!\big(k_n x\big),
	\qquad W_{\varepsilon_n}(x):=W(\varepsilon_n x),
	\]
	and $\mathbf{R}:L^{p^\prime}\to L^p$ is continuous. By Lemma~\ref{lem:32} and Proposition~\ref{prop:34},
	there exist $x_0\in M$ and translations $y_n$ such that (up to a subsequence)
	\[
	x_n:=\varepsilon_n y_n\to x_0,\qquad v_n(\cdot+y_n)\to w_0 \ \text{ in } L^{p^\prime}(\mathbb{R}^N),
	\]
	where $w_0$ is a least-energy critical point of $J_0$.
	For any $x$,
	\[
	k_n^{-\frac{4}{p-2}}\, u_n\!\Big(\tfrac{x}{k_n}+x_n\Big)
	= \mathbf{R}\!\Big(W_{\varepsilon_n}^{1/p} v_n\Big)(x+y_n)
	= \mathbf{R}\!\Big(W_{\varepsilon_n}^{1/p}(\cdot+y_n)\, v_n(\cdot+y_n)\Big)(x).
	\]
	Since $W_{\varepsilon_n}(\cdot+y_n)=W(x_n+\varepsilon_n\cdot)\to W(x_0)=W_0$ pointwise
	and $\mathbf{R}:L^{p^\prime}\to L^p$ is continuous, we obtain
	\[
	k_n^{-\frac{4}{p-2}}\, u_n\!\Big(\tfrac{\cdot}{k_n}+x_n\Big)
	\ \to\ \mathbf{R}\!\big(W_0^{1/p} w_0\big)=:u_0
	\quad \text{in } L^p(\mathbb{R}^N),
	\]
	and $u_0$ solves \eqref{eq:limit}.
\end{proof}

\begin{Rem}
	(i) The conclusion of Theorem~\ref{thm:35} also holds for any sequence of dual bound states:
	it suffices to assume $v_n$ are critical points of $J_{\varepsilon_n}$ with $J_{\varepsilon_n}(v_n)\to c_0$.
	
	(ii) By \cite[Propositions~5.1 and 5.2]{2019-Denis-Jean-Rainer-JLMS}, the convergence to $u_0$ holds in $W^{4,q}(\mathbb{R}^N)$ for all $1<q<\infty$.
	In particular, taking $q>\frac{N}{4}$ yields convergence in $L^\infty(\mathbb{R}^N)$ as well.
	Moreover, since $u_0\in W^{4,q}(\mathbb{R}^N)$, for any $\delta>0$ there exists $R_\delta>0$ such that, for all sufficiently large $n$,
	\[
	k_n^{-\frac{4}{p-2}}\;|u_n(x)|\;<\;\delta
	\quad \text{for all }\ |x-x_n|\ \ge\ \frac{R_\delta}{k_n},
	\]
	while at the same time
	\[
	k_n^{-\frac{4}{p-2}}\;\|u_n\|_{L^\infty(\mathbb{R}^N)}
	\;\longrightarrow\;
	\|u_0\|_{L^\infty(\mathbb{R}^N)}\;>\;0
	\quad \text{as } n\to\infty .
	\]
	In addition, if $\tilde{x}_n$ denotes any global maximizer of $|u_n|$, then $\tilde{x}_n\to x_0$ as $n\to\infty$.
\end{Rem}

\section{Multiplicity and concentration of solutions}

To obtain the multiplicity result, fix $\delta>0$ and choose $\rho>0$ such that $M_\delta\subset B_\rho(0)$. Define the truncation map
$$
\Psi:\mathbb{R}^N\to\mathbb{R}^N,\qquad
\Psi(x)=
\begin{cases}
	x, & |x|<\rho,\\[1mm]
	\dfrac{\rho x}{|x|}, & |x|\ge\rho.
\end{cases}
$$
For each $\varepsilon>0$, define $\beta_\varepsilon: L^{p'}(\mathbb{R}^N)\setminus\{0\}
\to\mathbb{R}^N$ as
$$
\beta_\varepsilon(v)
:=\frac{1}{\|v\|_{L^{p'}(\mathbb{R}^N)}^{p'}}
\int_{\mathbb{R}^N}\Psi(\varepsilon x)\,|v(x)|^{p'}\,\mathrm{d}x.
$$
Then
\begin{equation}\label{4-1}
	\lim_{\varepsilon\to0^+}\beta_\varepsilon(\varphi_{\varepsilon,y})
	=y,~\text{uniformly in }y\in M_\delta.
\end{equation}

Indeed, since
$$
\begin{aligned}
	\beta_\varepsilon(\varphi_{\varepsilon,y})
	&=
	\frac{\displaystyle\int_{\mathbb{R}^N}
		\Psi(\varepsilon x)\,|\varphi_{\varepsilon,y}(x)|^{p'}\,\mathrm{d}x}
	{\displaystyle\int_{\mathbb{R}^N}|\varphi_{\varepsilon,y}(x)|^{p'}\,\mathrm{d}x}\\
	&=
	\frac{\displaystyle\int_{\mathbb{R}^N}
		\Psi(\varepsilon x)\,\eta(\varepsilon x-y)^{p'}\,
		\bigl|w(x-\varepsilon^{-1}y)\bigr|^{p'}\,\mathrm{d}x}
	{\displaystyle\int_{\mathbb{R}^N}
		\eta(\varepsilon x-y)^{p'}\,
		\bigl|w(x-\varepsilon^{-1}y)\bigr|^{p'}\,\mathrm{d}x}\\
	&=\frac{\displaystyle
		\int_{\mathbb{R}^N}
		\Psi\bigl(\varepsilon z+y\bigr)\,
		\eta(\varepsilon z)^{p'}|w(z)|^{p'}\,\mathrm{d}z}
	{\displaystyle\int_{\mathbb{R}^N}
		\eta(\varepsilon z)^{p'}|w(z)|^{p'}\,\mathrm{d}z},
\end{aligned}
$$
it follows from the Lebesgue dominated convergence theorem that
$$
\int_{\mathbb{R}^N}
\Psi(\varepsilon z+y)\,\eta(\varepsilon z)^{p'}|w(z)|^{p'}\,\mathrm{d}z
\to
y\int_{\mathbb{R}^N}|w(z)|^{p'}\,\mathrm{d}z,
$$
and
$$
\int_{\mathbb{R}^N}\eta(\varepsilon z)^{p'}|w(z)|^{p'}\,\mathrm{d}z
\to
\int_{\mathbb{R}^N}|w(z)|^{p'}\,\mathrm{d}z.
$$
Therefore, for each fixed $y\in M_\delta$, as $\varepsilon\to 0^+$, we have $\beta_\varepsilon(\varphi_{\varepsilon,y})\to y$.

Moreover, this convergence is uniform in $y\in M_\delta$. In fact, $M_\delta$ is bounded and closed, hence compact, and $\Psi$ is uniformly continuous on bounded subsets of $\mathbb{R}^N$. Consequently,
$$
\sup_{y\in M_\delta}\bigl|\beta_\varepsilon(\varphi_{\varepsilon,y})-y\bigr|\to 0
~\text{as}~\varepsilon\to 0^+,
$$
which proves \eqref{4-1}.

Next, we introduce the sublevel set
$$
\Sigma_\varepsilon
:=\{v\in\mathcal{N}_\varepsilon:\ J_\varepsilon(v)\le c_0+\nu(\varepsilon)\},
$$
where $\nu:(0,\infty)\to(0,\infty)$ satisfies
$$
\nu(\varepsilon)>c_\varepsilon-c_0,\qquad
\nu(\varepsilon)\to0\quad(\varepsilon\to0^+).
$$
First, since $c_\varepsilon<c_0+\nu(\varepsilon)$ and $c_\varepsilon=\inf_{\mathcal{N}_\varepsilon}J_\varepsilon$, it follows that $\Sigma_\varepsilon\neq\varnothing$.

Let $(\varepsilon_n)$ be an arbitrary sequence with $\varepsilon_n\to0^+$. For each $n$, choose $v_n\in\Sigma_{\varepsilon_n}$ such that
$$
\inf_{y\in M_{\delta/2}}\bigl|\beta_{\varepsilon_n}(v_n)-y\bigr|
\ \ge\
\sup_{v\in\Sigma_{\varepsilon_n}}
\inf_{y\in M_{\delta/2}}\bigl|\beta_{\varepsilon_n}(v)-y\bigr|
-\frac1n.
$$
Since
$$
c_{\varepsilon_n}\le J_{\varepsilon_n}(v_n)\le c_0+\nu(\varepsilon_n),
$$
and $c_{\varepsilon_n}\to c_0$, $\nu(\varepsilon_n)\to0$, we deduce that
$$
J_{\varepsilon_n}(v_n)\to c_0.
$$

By Proposition~\ref{prop:34}, there exist $x_0\in M$, a least energy critical point $w_0$ of the limit functional $J_0$, and a sequence $(y_n)\subset\mathbb{R}^N$ such that, up to a subsequence,
$$
x_n:=\varepsilon_n y_n\to x_0,\qquad
v_n(\cdot+y_n)\to w_0\quad\text{in }L^{p'}(\mathbb{R}^N).
$$
From the definition of $\beta_{\varepsilon_n}(v_n)$ we have
$$
\begin{aligned}
	\beta_{\varepsilon_n}(v_n)
	=\frac{\displaystyle\int_{\mathbb{R}^N}
		\Psi\bigl(\varepsilon_n z+x_n\bigr)\,
		|w_n(z)|^{p'}\,\mathrm{d}z}
	{\displaystyle\int_{\mathbb{R}^N}|w_n(z)|^{p'}\,\mathrm{d}z},
\end{aligned}
$$
where $w_n(z):=v_n(z+y_n)$. By the definition of $\Psi$ and its continuity on $\mathbb{R}^N$ we have
$$
\Psi\bigl(\varepsilon_n z+x_n\bigr)\to \Psi(x_0)=x_0
\quad\text{for each fixed }z\in\mathbb{R}^N,
$$
and, for all $n$ and $z$,
$$
\bigl|\Psi(\varepsilon_n z+x_n)\bigr|\le\rho.
$$

Since $w_n\to w_0$ in $L^{p'}(\mathbb{R}^N)$, the continuity of the Nemytskii operator $u\mapsto |u|^{p'}$ implies that $|w_n|^{p'}\to|w_0|^{p'}$ in $L^1(\mathbb{R}^N)$. 
Using the uniform bound $|\Psi(\varepsilon_n z+x_n)|\le\rho$, by the Lebesgue dominated convergence theorem, we obtain
$$
\int_{\mathbb{R}^N}
\Psi\bigl(\varepsilon_n z+x_n\bigr)\,
|w_n(z)|^{p'}\,\mathrm{d}z
\ \to\
x_0\int_{\mathbb{R}^N}|w_0(z)|^{p'}\,\mathrm{d}z.
$$

Similarly, since $|w_n|^{p'}\to|w_0|^{p'}$ in $L^1(\mathbb{R}^N)$,
$$
\int_{\mathbb{R}^N}|w_n(z)|^{p'}\,\mathrm{d}z
\to
\int_{\mathbb{R}^N}|w_0(z)|^{p'}\,\mathrm{d}z.
$$
Therefore,
$$
\beta_{\varepsilon_n}(v_n)\to x_0\in M.
$$

In particular, for all sufficiently large $n$ we have
$$
\operatorname{dist}\bigl(\beta_{\varepsilon_n}(v_n),M_{\delta/2}\bigr)
=\inf_{y\in M_{\delta/2}}|\beta_{\varepsilon_n}(v_n)-y|
<\frac{\delta}{2}.
$$
By the choice of $v_n$, this implies that
$$
\sup_{v\in\Sigma_{\varepsilon_n}}
\inf_{y\in M_{\delta/2}}|\beta_{\varepsilon_n}(v)-y|
\le \inf_{y\in M_{\delta/2}}|\beta_{\varepsilon_n}(v_n)-y|+\frac1n
\to 0.
$$
Since the sequence $(\varepsilon_n)$ is arbitrary, we arrive at
\begin{equation}\label{4-2}
	\lim_{\varepsilon\to0^+}\
	\sup_{v\in\Sigma_\varepsilon}\
	\inf_{y\in M_{\delta/2}}\bigl|\beta_\varepsilon(v)-y\bigr|
	=0.
\end{equation}

\medskip

\noindent\textbf{Proof of Theorem~1.2.}
\emph{Multiplicity of solutions.} For any $y\in M$ and $\varepsilon>0$, consider the test function $\varphi_{\varepsilon,y}$ constructed earlier, and let
$$
t_{\varepsilon,y}>0
$$
be the unique real number such that
$$
t_{\varepsilon,y}\varphi_{\varepsilon,y}\in\mathcal{N}_\varepsilon,
$$
equivalently,
$$
J_\varepsilon\bigl(t_{\varepsilon,y}\varphi_{\varepsilon,y}\bigr)
=\max_{t>0}J_\varepsilon(t\varphi_{\varepsilon,y}).
$$
Define
$$
\Phi_\varepsilon:M\to\mathcal{N}_\varepsilon,\qquad
\Phi_\varepsilon(y):=t_{\varepsilon,y}\varphi_{\varepsilon,y}.
$$

By the previous analysis on $c_\varepsilon\to c_0$ (see Lemma~\ref{lem:31} and Proposition~\ref{prop:33}), together with assumption $(W_2)$, there exist $\bar\varepsilon>0$ and a function
$$
\nu:(0,\bar\varepsilon)\to(0,\infty)
$$
such that
$$
\nu(\varepsilon)>c_\varepsilon-c_0,\qquad
\nu(\varepsilon)\to0\quad\text{as }\varepsilon\to0^+,
$$
and, for all $0<\varepsilon<\bar\varepsilon$ and $y\in M$,
$$
J_\varepsilon\bigl(\Phi_\varepsilon(y)\bigr)
=J_\varepsilon\bigl(t_{\varepsilon,y}\varphi_{\varepsilon,y}\bigr)
< c_0+\nu(\varepsilon)<c_\infty.
$$
Hence
$$
\Phi_\varepsilon(M)\subset\Sigma_\varepsilon,\qquad
0<\varepsilon<\bar\varepsilon.
$$

Recall the barycenter map $\beta_\varepsilon$ introduced above and observe that it is invariant under positive scalar multiplication, i.e.,
$$
\beta_\varepsilon(tv)=\beta_\varepsilon(v),\qquad
\forall\,t>0,\ v\in L^{p'}(\mathbb{R}^N)\setminus\{0\}.
$$
On the one hand, by \eqref{4-1},
$$
\beta_\varepsilon(\Phi_\varepsilon(y))
=\beta_\varepsilon\bigl(t_{\varepsilon,y}\varphi_{\varepsilon,y}\bigr)
=\beta_\varepsilon(\varphi_{\varepsilon,y})
\to y,
\qquad\text{as }\varepsilon\to0^+\text{ uniformly in }y\in M.
$$

On the other hand, by \eqref{4-2} there exists $\varepsilon_1\in(0,\bar\varepsilon]$ such that for all $0<\varepsilon<\varepsilon_1$,
$$
\beta_\varepsilon(\Sigma_\varepsilon)\subset M_\delta.
$$

Therefore, for sufficiently small $\varepsilon>0$ the composition
$$
\beta_\varepsilon\circ\Phi_\varepsilon:M\to M_\delta
$$
is well-defined and continuous, and converges uniformly to the identity map on $M$. More precisely,
$$
\sup_{y\in M}
\bigl|\beta_\varepsilon(\Phi_\varepsilon(y))-y\bigr|\to0
\quad\text{as }\varepsilon\to0^+.
$$
Thus there exists $\varepsilon_2\in(0,\varepsilon_1)$ such that, for all $0<\varepsilon<\varepsilon_2$,
$$
\sup_{y\in M}
\bigl|\beta_\varepsilon(\Phi_\varepsilon(y))-y\bigr|<\delta.
$$
Fix such an $\varepsilon$ and define the homotopy
$$
H:[0,1]\times M\to M_\delta,\qquad
H(t,y):=(1-t)\,\beta_\varepsilon(\Phi_\varepsilon(y))+t\,y.
$$
For any $t\in[0,1]$,
$$
|H(t,y)-y|
\le\bigl|\beta_\varepsilon(\Phi_\varepsilon(y))-y\bigr|
<\delta,
$$
and hence $H(t,y)\in B_\delta(y)\subset M_\delta$ for all $(t,y)$. Thus $H$ is a continuous homotopy with values in $M_\delta$ and satisfies
$$
H(0,y)=\beta_\varepsilon(\Phi_\varepsilon(y)),
\qquad
H(1,y)=y=i(y),
$$
where $i:M\hookrightarrow M_\delta$ is the natural inclusion. Consequently,
$$
\beta_\varepsilon\circ\Phi_\varepsilon\simeq i
\quad\text{in }M_\delta.
$$

By the category lemma of Cingolani and Lazzo (see  \cite[Lemma~2.2]{2000-Cingolani&Lazzo-JDE}), and using the above homotopy relation, we obtain
$$
\operatorname{cat}_{\mathcal{N}_\varepsilon}(\Sigma_\varepsilon)
\ge \operatorname{cat}_{M_\delta}(M)
=:m,
\qquad 0<\varepsilon<\varepsilon_2.
$$

Moreover, by $(W_1)$ and Lemma~\ref{lem:PS-below-cinf} we know that
$$
J_\varepsilon\big|_{\mathcal{N}_\varepsilon}
\ \text{satisfies the }(PS)\text{ condition on the energy interval }(-\infty,c_\infty).
$$
Since
$$
\sup_{\Sigma_\varepsilon}J_\varepsilon
\le c_0+\nu(\varepsilon)
<c_\infty,
$$
it follows that
$$
J_\varepsilon\big|_{\mathcal{N}_\varepsilon}
\ \text{also satisfies the }(PS)\text{ condition on }\Sigma_\varepsilon.
$$

Therefore, we can apply Lusternik--Schnirelmann theory to the functional
$$
J_\varepsilon\big|_{\mathcal{N}_\varepsilon}:\mathcal{N}_\varepsilon\to\mathbb{R}
$$
defined on the $C^1$-manifold $\mathcal{N}_\varepsilon$; for an Lusternik--Schnirelmann theory on $C^1$-manifolds we refer, for instance, to  \cite{1995-Ribarska&Tsachev&Krastanov-SMJ,1993-Corvellec&Degiovanni&Marzocchi-TMNA,1988-Szulkin-Ann}). From
$$
\operatorname{cat}_{\mathcal{N}_\varepsilon}(\Sigma_\varepsilon)\ge m
$$
we deduce that, for each $0<\varepsilon<\varepsilon_2$, there exist at least
$$
m:=\operatorname{cat}_{M_\delta}(M)
$$
pairwise distinct critical points
$$
v_\varepsilon^{(1)},\dots,v_\varepsilon^{(m)}\in\Sigma_\varepsilon\subset\mathcal{N}_\varepsilon
$$
such that
$$
J_\varepsilon'(v_\varepsilon^{(j)})=0,\qquad j=1,\dots,m.
$$

By the reconstruction formula in the dual framework (see Section~2.2, in particular \eqref{eq:dual-u} and the subsequent discussion), for each critical point
$$
v_\varepsilon\in\mathcal{N}_\varepsilon
$$
we define
$$
u_k(x)
:=k^{\frac{4}{p-2}}\,
\mathbf{R}\bigl(W_\varepsilon^{1/p}v_\varepsilon\bigr)(k x),
\qquad k=\varepsilon^{-1},\quad W_\varepsilon(x):=W(\varepsilon x),
$$
where $\mathbf{R}:L^{p'}(\mathbb{R}^N)\to L^p(\mathbb{R}^N)$ is the Helmholtz resolvent operator. Then $u_k$ is a nontrivial solution of the original problem
$$
\Delta^2 u - \beta k^2\Delta u + \alpha k^4 u
= W(x)\,|u|^{p-2}u
\quad\text{in }\mathbb{R}^N,
$$
and there is a one-to-one correspondence between nontrivial critical points of $J_\varepsilon$ and nontrivial solutions of the above equation; see \eqref{eq:dual-u}  and the subsequent discussion. In particular, distinct $v_\varepsilon^{(j)}$ give rise to distinct $u_k^{(j)}$.

Applying this reconstruction to each $v_\varepsilon^{(j)}$, for every $0<\varepsilon<\varepsilon_2$ we obtain
$$
u_k^{(j)}(x)
:=k^{\frac{4}{p-2}}\,
\mathbf{R}\bigl(W_\varepsilon^{1/p}v_\varepsilon^{(j)}\bigr)(k x),
\qquad j=1,\dots,m,
$$
where $k=\varepsilon^{-1}$. If $k>k(\delta):=\varepsilon_2^{-1}$, then problem \eqref{1.5} admits at least
$$
m=\operatorname{cat}_{M_\delta}(M)
$$
pairwise distinct nontrivial solutions $u_k^{(1)},\dots,u_k^{(m)}$. This proves the multiplicity part of Theorem~1.2.

\medskip
\emph{Concentration behavior of solutions.}
We now fix some $j\in\{1,\dots,m\}$ and study the concentration behaviour of the corresponding family $u_k^{(j)}$.

Since
$$
v_\varepsilon^{(j)}\in\Sigma_\varepsilon
=\{v\in\mathcal{N}_\varepsilon:\ J_\varepsilon(v)\le c_0+\nu(\varepsilon)\},
$$
and
$$
c_\varepsilon\le J_\varepsilon\bigl(v_\varepsilon^{(j)}\bigr)\le c_0+\nu(\varepsilon),
$$
from $c_\varepsilon\to c_0$ and $\nu(\varepsilon)\to0$ we infer that, for each fixed $j$,
$$
J_\varepsilon\bigl(v_\varepsilon^{(j)}\bigr)\to c_0,
\qquad\varepsilon\to0^+.
$$

Let $(k_n)_n$ be any sequence with $k_n>k(\delta)$ and $k_n\to\infty$, and set
$$
\varepsilon_n:=k_n^{-1}\to0^+,\qquad
v_n:=v_{\varepsilon_n}^{(j)}\in\mathcal{N}_{\varepsilon_n}.
$$
Then
$$
J_{\varepsilon_n}(v_n)\to c_0.
$$

By Proposition~\ref{prop:34}, there exist a point $x_0^{(j)}\in M$, a least energy critical point $w_0^{(j)}$ of the limit functional $J_0$, and a sequence of translation vectors $(y_n^{(j)})_n\subset\mathbb{R}^N$ such that, up to a subsequence,
$$
x_n^{(j)}:=\varepsilon_n y_n^{(j)}\to x_0^{(j)},
\qquad
v_n(\cdot+y_n^{(j)})\to w_0^{(j)}
\quad\text{in }L^{p'}(\mathbb{R}^N).
$$
Define
$$
u_n(x):=u_{k_n}^{(j)}(x)
=k_n^{\frac{4}{p-2}}\,
\mathbf{R}\bigl(W_{\varepsilon_n}^{1/p}v_n\bigr)(k_n x),
\qquad W_{\varepsilon_n}(x):=W(\varepsilon_n x).
$$
We claim that
$$
k_n^{-\frac{4}{p-2}}\,
u_n\Bigl(\tfrac{x}{k_n}+x_n^{(j)}\Bigr)
\to u_0^{(j)}
\quad\text{in }L^p(\mathbb{R}^N),
$$
for some limit function $u_0^{(j)}$. A direct computation shows that
$$
\begin{aligned}
	k_n^{-\frac{4}{p-2}}\,
	u_n\Bigl(\tfrac{x}{k_n}+x_n^{(j)}\Bigr)
	&=\mathbf{R}\bigl(W_{\varepsilon_n}^{1/p}v_n\bigr)
	\Bigl(x+k_n x_n^{(j)}\Bigr)\\
	&=\mathbf{R}\bigl(W_{\varepsilon_n}^{1/p}v_n\bigr)
	\bigl(x+y_n^{(j)}\bigr)\\
	&=\mathbf{R}\Bigl(
	W_{\varepsilon_n}^{1/p}(\cdot+y_n^{(j)})\,v_n(\cdot+y_n^{(j)})
	\Bigr)(x).
\end{aligned}
$$
Let
$$
g_n(\xi):=
W_{\varepsilon_n}^{1/p}(\xi+y_n^{(j)})\,v_n(\xi+y_n^{(j)}),
\qquad \xi\in\mathbb{R}^N,
$$
so that
$$
k_n^{-\frac{4}{p-2}}\,
u_n\Bigl(\tfrac{\cdot}{k_n}+x_n^{(j)}\Bigr)
=\mathbf{R}(g_n).
$$

We now prove that $g_n\to W_0^{1/p}w_0^{(j)}$ in $L^{p'}(\mathbb{R}^N)$, where $W_0:=W(x_0^{(j)})$. Notice that
$$
W_{\varepsilon_n}(\xi+y_n^{(j)})
=W\bigl(\varepsilon_n(\xi+y_n^{(j)})\bigr)
=W\bigl(x_n^{(j)}+\varepsilon_n\xi\bigr),
$$
and, for each fixed $\xi$, $x_n^{(j)}+\varepsilon_n\xi\to x_0^{(j)}$ as $n\to\infty$. Hence
$$
W_{\varepsilon_n}^{1/p}(\xi+y_n^{(j)})
\to W_0^{1/p}
\quad\text{pointwise in }\mathbb{R}^N.
$$                                                                                 

On the other hand, $v_n(\cdot+y_n^{(j)})\to w_0^{(j)}$ in $L^{p'}(\mathbb{R}^N)$, so the sequence $(v_n(\cdot+y_n^{(j)}))_n$ is bounded in $L^{p'}(\mathbb{R}^N)$. For any $R>0$, $W_{\varepsilon_n}^{1/p}(\cdot+y_n^{(j)})\to W_0^{1/p}$ converges uniformly in $L^\infty(B_R)$, and hence
$$
\bigl\|(W_{\varepsilon_n}^{1/p}(\cdot+y_n^{(j)})-W_0^{1/p})\,w_0^{(j)}\bigr\|_{L^{p'}(B_R)}
\to0.
$$
Combining this with $v_n(\cdot+y_n^{(j)})\to w_0^{(j)}$ in $L^{p'}(\mathbb{R}^N)$, and using the triangle inequality and Hölder's inequality, we obtain
$$
\bigl\|g_n-W_0^{1/p}w_0^{(j)}\bigr\|_{L^{p'}(B_R)}\to0.
$$

On $B_R^c:=\mathbb{R}^N\setminus B_R(0)$, we use the facts that $w_0^{(j)}\in L^{p'}(\mathbb{R}^N)$ and $v_n(\cdot+y_n^{(j)})\to w_0^{(j)}$ strongly in $L^{p'}(\mathbb{R}^N)$. Given $\eta>0$, choose $R>0$ large enough so that
$$
\|w_0^{(j)}\|_{L^{p'}(B_R^c)}<\eta.
$$
Then, by the $L^{p'}$-convergence, there exists $n_0\in\mathbb{N}$ such that for all $n\ge n_0$,
$$
\|v_n(\cdot+y_n^{(j)})-w_0^{(j)}\|_{L^{p'}(\mathbb{R}^N)}<\eta.
$$
In particular,
$$
\|v_n(\cdot+y_n^{(j)})\|_{L^{p'}(B_R^c)}
\le \|w_0^{(j)}\|_{L^{p'}(B_R^c)}
+\|v_n(\cdot+y_n^{(j)})-w_0^{(j)}\|_{L^{p'}(B_R^c)}
<2\eta,
$$
for all $n\ge n_0$.                                                                 
Since $\|W_{\varepsilon_n}^{1/p}(\cdot+y_n^{(j)})\|_{L^\infty(\mathbb{R}^N)}\le C$, we obtain
$$
\|g_n\|_{L^{p'}(B_R^c)}
\le C\|v_n(\cdot+y_n^{(j)})\|_{L^{p'}(B_R^c)}
\le 2C\eta,\qquad n\ge n_0.
$$
Moreover,
$$
\|W_0^{1/p}w_0^{(j)}\|_{L^{p'}(B_R^c)}
=W_0^{1/p}\,\|w_0^{(j)}\|_{L^{p'}(B_R^c)}
\le C_0\eta
$$
for some constant $C_0>0$ independent of $n$. Thus, for all $n\ge n_0$,
$$
\bigl\|g_n-W_0^{1/p}w_0^{(j)}\bigr\|_{L^{p'}(B_R^c)}
\le \|g_n\|_{L^{p'}(B_R^c)}+\|W_0^{1/p}w_0^{(j)}\|_{L^{p'}(B_R^c)}
\le (2C+C_0)\eta.
$$
Since $\eta>0$ is arbitrary, for $R$ sufficiently large we have
\[
\bigl\| g_n - W_0^{1/p} w_0^{(j)} \bigr\|_{L^{p'}(B_R^c)}
\;\longrightarrow\; 0
\quad \text{as } n \to \infty .
\]
Together with the convergence on $B_R$, we conclude that
$$
g_n
=W_{\varepsilon_n}^{1/p}(\cdot+y_n^{(j)})\,v_n(\cdot+y_n^{(j)})
\to W_0^{1/p}w_0^{(j)}
\quad\text{in }L^{p'}(\mathbb{R}^N).
$$

Since $\mathbf{R}:L^{p'}(\mathbb{R}^N)\to L^p(\mathbb{R}^N)$ is continuous, we obtain
$$
k_n^{-\frac{4}{p-2}}\,
u_n\Bigl(\tfrac{\cdot}{k_n}+x_n^{(j)}\Bigr)
=\mathbf{R}(g_n)
\to
\mathbf{R}\bigl(W_0^{1/p}w_0^{(j)}\bigr)
=:u_0^{(j)}
\quad\text{in }L^p(\mathbb{R}^N).
$$

By the dual variational theory developed in Section~2, $u_0^{(j)}$ is a ground state solution of the limit problem
$$
\Delta^2 u - \beta \Delta u + \alpha u
=W_0\,|u|^{p-2}u
\quad\text{in }\mathbb{R}^N.
$$
Since $x_n^{(j)}\to x_0^{(j)}\in M$, we obtain precisely the concentration property stated in Theorem~1.2, namely,
$$
x_n^{(j)}\to x_0^{(j)},\qquad
k_n^{-\frac{4}{p-2}}\,
u_{k_n}^{(j)}\Bigl(\tfrac{\cdot}{k_n}+x_n^{(j)}\Bigr)
\to u_0^{(j)}
\quad\text{in }L^p(\mathbb{R}^N).
$$

This completes the proof of Theorem~1.2.
\qed

\begin{Rem}
	According to the regularity results in \cite[Propositions~5.1 and 5.2]{2019-Denis-Jean-Rainer-JLMS}, one has
	$$
	u_0^{(j)}\in W^{4,q}(\mathbb{R}^N)\quad\text{for all }1<q<\infty.
	$$
	Using elliptic regularity for the limit equation and the uniform boundedness of $u_{k_n}^{(j)}$ in $W^{4,q}$, one can further show that there exists a subsequence (still denoted by $k_n$) such that, for every $1<q<\infty$,
	$$
	k_n^{-\frac{4}{p-2}}\,
	u_{k_n}^{(j)}\!\Bigl(\tfrac{\cdot}{k_n}+x_n^{(j)}\Bigr)
	\ \to\ u_0^{(j)}
	\quad \text{in }W^{4,q}(\mathbb{R}^N).
	$$
	Taking $q>\dfrac{N}{4}$ and using the Sobolev embedding $W^{4,q}(\mathbb{R}^N)\hookrightarrow L^\infty(\mathbb{R}^N)$, we obtain
	$$
	k_n^{-\frac{4}{p-2}}\,
	\|u_{k_n}^{(j)}\|_{L^\infty(\mathbb{R}^N)}
	\;\to\;
	\|u_0^{(j)}\|_{L^\infty(\mathbb{R}^N)}\;>\;0.
	$$
	Moreover, from the decay of $u_0^{(j)}$ at infinity (namely, for every $\delta>0$ there exists $R_\delta>0$ such that
	$|u_0^{(j)}(x)|<\delta$ whenever $|x|\ge R_\delta$), together with the above convergence in $W^{4,q}$ and $L^\infty$, it follows that for all sufficiently large $n$,
	$$
	k_n^{-\frac{4}{p-2}}\,
	\bigl|u_{k_n}^{(j)}(x)\bigr|
	<\delta
	\quad\text{whenever }|x-x_n^{(j)}|
	\ge\frac{R_\delta}{k_n}.
	$$
	Hence the modulus of $u_{k_n}^{(j)}$ attains its global maximum only in an $O(k_n^{-1})$-neighbourhood of $x_n^{(j)}$. If $\tilde{x}_n^{(j)}$ denotes any global maximum point of $\bigl|u_{k_n}^{(j)}\bigr|$, then necessarily
	$$
	\bigl|\tilde{x}_n^{(j)}-x_n^{(j)}\bigr|
	\le\frac{R_\delta}{k_n}.
	$$
	Letting $n\to\infty$ (and recalling that $x_n^{(j)}\to x_0^{(j)}$), we conclude that
	$$
	\tilde{x}_n^{(j)}\to x_0^{(j)}\in M,
	$$
	that is, the global maxima of these multiple solutions concentrate around $M$.
\end{Rem}

\vspace{1em}

\par
{\bf Conflict Of Interest Statement.} The authors declare that there are no conflict of interests, we do not have any possible conflicts of interest.
\par
{\bf Data Availability Statement.} Our manuscript has non associated data.
\bibliographystyle{plain}
\bibliography{references}
\end{document}